
\documentclass[a4paper,10pt,twoside]{article}

\def\ifundefined#1{\expandafter\ifx\csname#1\endcsname\relax}

\usepackage{graphicx}
\usepackage{graphics}
\usepackage{mathrsfs}
\usepackage{float}
\usepackage{rotating}

\usepackage[T1]{fontenc}
\usepackage[english]{babel}
\usepackage[utf8]{inputenc}
\usepackage{amsthm,amsmath,amssymb,upref,amscd,amsfonts}  
\usepackage{euscript}
\usepackage{epsfig}
\usepackage{ae}
\usepackage{aecompl}
\usepackage[metapost]{mfpic}
\usepackage{calc}
\usepackage{enumerate}
\usepackage{enumitem}
\usepackage{url}
\usepackage[affil-it]{authblk}


\theoremstyle{plain}
\newtheorem{theorem}{Theorem}[section]
\newtheorem{proposition}[theorem]{Proposition}
\newtheorem{lemma}[theorem]{Lemma}
\newtheorem{corollary}[theorem]{Corollary}
\newtheorem{theorem*}{Theorem}

\theoremstyle{definition}

\newtheorem{remark}[theorem]{Remark}

\setlength{\textheight}{192mm}
\addtolength{\topmargin}{9mm}
\setlength{\headsep}{1.5\baselineskip}
\setlength{\textwidth}{121mm}
\addtolength{\oddsidemargin}{10.5mm}
\addtolength{\evensidemargin}{-10.5mm}
\newlength{\normalparindent}
\setlength{\normalparindent}{2em}
\setlength{\parindent}{\normalparindent}

\numberwithin{equation}{section}


\mathchardef\sa="303A

\DeclareMathOperator*{\esssup}{ess\,sup}
\DeclareMathOperator*{\essinf}{ess\,inf}

\newcommand{\rr}{\ensuremath{s}}
\renewcommand{\epsilon}{\varepsilon}
\newcommand{\R}{\ensuremath{\mathbf{R}}}
\newcommand{\pval}{\ensuremath{\mathrm{p.v. }}} 

\newcommand{\Np}[3][p]{\ensuremath{\mathcal{N}_{#1}(#2 \, ; \, #3)}} 
\newcommand{\Npw}[1][p]{\ensuremath{\mathcal{N}_{#1}}}

\newcommand{\Yonew}{\ensuremath{Y^{1,p}_{M}(\R^N)}}
\newcommand{\Wloc}{\ensuremath{W^{1,p}_{\mathrm{loc}}(\R^N \setminus \{ 0 \})}}
\newcommand{\Lloc}[1][1]{\ensuremath{L^{#1}_{\mathrm{loc}}(\R^N \setminus \{ 0 \})}}
\newcommand{\Xp}[1][1]{\ensuremath{X^p(\R^N)}}

\newcommand{\ld}[1][x]{\ensuremath{\Phi(#1)}}
\newcommand{\vp}{\ensuremath{\varphi}}
\newcommand{\lcw}{\ensuremath{\Lambda}}
\newcommand{\lcwz}{\ensuremath{\Lambda_0}}
\newcommand{\lcwm}{\ensuremath{\Lambda_{*}}}

\renewcommand{\S}{\ensuremath{\mathcal{S}}}

\newcommand{\Nhi}{\ensuremath{M}}

\newcommand{\Ca}{\ensuremath{c_1}}
\newcommand{\Cb}{\ensuremath{c_3}}
\newcommand{\Cc}{\ensuremath{c_2}}

\newcommand{\we}{\ensuremath{\gamma}}
\newcommand{\wt}{\ensuremath{\Gamma}}

\newcommand{\BLg}[1][\wt]{\ensuremath{{BL}^{1,p}_{{#1}}(\R^N)}} 
\newcommand{\Lpg}[1][\we]{\ensuremath{{L}^{p}_{{#1}}(\R^N)}}


\title{Two Weight Estimates for the Single Layer Potential on Lipschitz Surfaces with Small Lipschitz Constant}
\author{Johan Thim ({\small \tt{johan.thim@liu.se}})}
\affil{\small Department of Mathematics, University of Link\"{o}ping, Link\"{o}ping, Sweden}

\date{\today}

\begin{document}

\maketitle
\begin{abstract}
\noindent
This article considers two weight estimates for the single layer potential ---
corresponding to the Laplace operator in~$\R^{N+1}$ ---  
on Lipschitz surfaces with small Lipschitz constant.
We present conditions on the weights to obtain solvability and uniqueness results
in weighted Lebesgue spaces and weighted homogeneous Sobolev spaces, where
the weights are assumed to be radial and doubling. In the case when the
weights are additionally assumed to be differentiable almost everywhere, simplified
conditions in terms of the logarithmic derivative are presented, and as an
application, we prove that the operator corresponding to the single layer potential
in question is an isomorphism between certain weighted spaces of the type mentioned above. 
Furthermore, we consider several explicit weight functions. In particular, we present results
for power exponential weights which generalize known results for
the case when the single layer potential is reduced to a Riesz potential,
which is the case when the Lipschitz surface is given by a hyperplane.
\end{abstract}


\vspace{0.75cm}

\noindent
{\bf Keywords}: Single layer potentials; Lipschitz surface; Singular integrals; Weighted spaces; Homogeneous Sobolev spaces

\medskip

\noindent
{\bf MSC classification}: 45Exx 

\section{Introduction}
The {single layer potential} on a Lipschitz surface~$S$ in~$\R^{N+1}$, where~$N \geq 2$, is defined by 
\begin{equation}
\label{eq:simplelayerpot}
\int_{S} \frac{U(Q)}{|P-Q|^{N-1}} \, dS(Q) \text{,} \quad P \in S \text{,}
\end{equation}
where~$dS$ is the Euclidean surface measure. In this article, the surface~$S$ is assumed to be
the graph of a Lipschitz function~$\vp \colon \R^N \rightarrow \R$ such that~$\vp(0) = 0$. 
We parametrize the single layer potential in~(\ref{eq:simplelayerpot}) 
by~$\ld[x] = (x,\vp(x))$ for~$x$ in~$\R^N$ and denote the resulting operator by~$\S$, i.e., formally
\[
\S u(x) = 
\int_{\R^N} \frac{u(y) \sqrt{1 + |\nabla \vp(y)|^2}}{|\ld[x] - \ld[y]|^{N-1}} \, dy \text{,} \quad x \in \R^N \text{.}
\]
The aim of this article is to describe under what conditions the single layer potential operator is an isomorphism between
certain weighted Lebesgue spaces and weighted homogeneous Sobolev spaces. 
The single layer potential is an important object that arises naturally in, e.g., the direct
approach for solving Laplace's equation with boundary integral methods; see for instance Hsiao
and Wendland~\cite{wendland}. There are several other types of applications, and the single layer potential
is a rather well studied object. We refer to Kozlov, Wendland, and Goldberg~\cite{KozlovWendland}, Costabel~\cite{costabel},
and references found therein for applications and properties of the single layer potential.
Moreover, we note that in the case that the surface is the hyperplane~$x_{N+1} = 0$, 
the single layer potential is reduced to a Riesz potential of order one. The Riesz potentials
are named
after Marcel Riesz, who introduced them in the 1930s~\cite{Riesz1939,Riesz1949}. These objects
are well known and can be used in connection with, e.g., fractional integrals; see Rubin~\cite{Rubin1996}.
The results presented in this paper generalize those found
in Kozlov, Thim, and Turesson~\cite{thim1} for Riesz potentials on~$\R^N$. 

We will now focus our attention on investigating the equation
\begin{equation}
\label{eq:maineq}
\S u (x) = f(x) \text{,} \quad x \in \R^N \text{.}
\end{equation}
Specifically, we consider two-weighted estimates for solutions to~(\ref{eq:maineq})
in weighted $L^p$-spaces, with right-hand side in weighted homogeneous Sobolev spaces,
similar to those found in Section~7.5 in~\cite{kozlovmazyaDEOC},
or in Section~8 of~\cite{thim1} for the Riesz potential case with power exponential weights. 
We will rely on results from Kozlov, Thim, and Turesson~\cite{thim3}, where we investigated
the influence of perturbations of a surface like a cone by a small Lipschitz perturbation
and results were expressed in terms of seminorms and 
the function~$\lcw(r)$. This functions is defined as the Lipschitz 
constant of~$\vp$ on a ball of radius~$2r$: 
\begin{equation}
\label{eq:def_lc}
\lcw(r) = \sup_{|x|,|y| \leq 2r, \; x \neq y} \frac{|\vp(x) - \vp(y)|}{|x-y|}, \quad r > 0.
\end{equation}
We denote the global Lipschitz constant of~$\vp$ by~$\lcwz$. Note that we only consider
small perturbations in the sense that~$\lcwz$ is assumed to be sufficiently small, which was
the setting in~\cite{thim3} due to the application of a fixed point theorem in locally convex
spaces~\cite{thim2}. Moreover, in this article, the function~$\lcw$ will be assumed to satisfy 
a Dini-type condition:
\[
\int_0^1 \lcw(\nu) \, \frac{d\nu}{\nu} < \infty.
\]
The class of weights we will consider consist of positive and radial functions on~$(0,\infty)$
that are doubling in the sense
that there exist positive constants~$C_{\we}$ and~$C_{\wt}$ such that
\[
\we(2x) \leq C_{\we} \we(x) \qquad \mbox{and} \qquad
\wt(2x) \leq C_{\wt} \wt(x)
\]
for almost every~$x \in \R^N$. 
A prototypical example of weights of this type is given by power exponentials, e.g.,~$\we(r) = r^{\alpha}$ for~$r > 0$. 
We introduce the space~$\Lpg$ as the Banach space consisting of locally integrable functions
on~$\R^N \setminus \{0\}$ such that
\[
\| u \|_{\Lpg} = \biggl( \int_{\R^N} \we(x)^p |u(x)|^p \, dx \biggr)^{1/p} < \infty.
\]
The space~$\BLg$ is the homogeneous Sobolev space consisting of functions from the local
Sobolev space~$\Wloc$ such that
\[
\| f \|_{\BLg} =  \biggl( \int_{\R^N} \frac{\wt(x)^p}{|x|^p}  |f(x)|^p \, dx 
+ \int_{\R^N} \wt(x)^p |\nabla f(x)|^p \, dx \biggr)^{1/p}\text{.}
\]
One can prove that the first term in the norm of~$\BLg$ can be estimated by the second term in the same norm if
\begin{equation}
\label{eq:i:cond_eq_norm}
\sup_{r > 0} \biggl( \int_0^r s^{N-1-p} \wt(s)^p ds \biggr)^{1/p}
		\biggl( \int_{r}^{\infty} s^{-(N-1)/(p-1)} \wt(s)^{-p'} ds \biggr)^{1/p'}
  < \infty;
\end{equation}
see Lemma~\ref{l:eq_norm}. 
From this it follows that the expression~$\|\nabla f\|_{\Lpg[\wt]}$ defines an equivalent norm on~$\BLg$
if~(\ref{eq:i:cond_eq_norm}) holds. 
Conditions of the type seen in~(\ref{eq:i:cond_eq_norm}) are frequently present in this article. These types
of conditions, sometimes referred to as conditions of Muckenhoupt type, are sufficient and necessary to obtain
weighted Hardy inequalities. We refer to Muckenhoupt~\cite{Muckenhoupt1972} and Maz'ya~\cite{mazya_sobolev},
and references found therein. 

The main results in this article are the following two theorems concerning existence and uniqueness
for~(\ref{eq:maineq}) when the spaces~$\Lpg$ and~$\BLg$ are considered. The results are rather technical, and for that reason,
we present in Theorem~\ref{t:Lisomorphism} an application where the weights are assumed to be differentiable almost everywhere with respect to the radial
coordinate. This simplifies the conditions significantly.
The existence and uniqueness results follow from corresponding theorems for local spaces derived previously in Theorems~1.1 and~1.2 of~\cite{thim3}.
Indeed, Theorem~1.1 of~\cite{thim3} states that there exist positive constants\/~$\lcwm$,\/~$\Ca$, and\/~$\Cc$, depending only on\/~$N$
and\/~$p$, such that if\/~$\lcwz \leq \lcwm$, we obtain existence and uniqueness results for the equation in~(\ref{eq:maineq})
under certain restrictions on~$p$ and the involved functions. We will use these constants throughout this article, and 
in accordance with the proof of Lemma~3.7 in~\cite{thim3}, we also assume that~$\Ca \lcwm \leq 1/2$ and that~$\Cc \lcwm \leq (N-1)/2$. 
We use the notation~$M = N - \Cc \lcwz$, and moreover, tacitly assume that~$1 < p < \infty$ if nothing else is stated.
The conjugate exponent~$p'$ is defined as~$p' = p/(p-1)$ throughout. 

\begin{theorem}
\label{t:i:exist}
Suppose that~{\rm(}\ref{eq:i:cond_eq_norm}{\rm)} holds, 
\begin{equation}
\label{eq:i:cond_J1}
\sup_{r > 0} 
	\biggl( \int_0^{r} s^{p'(M-N/p) - 1} \wt(s)^{-p'} \, ds\biggr)^{1/p'} 
	\biggl( \int_{r}^{\infty} s^{N-1-Mp} \we(s)^p \, ds \biggr)^{1/p}
< \infty
\end{equation}
and
\begin{equation}
\label{eq:i:cond_J2}
\begin{aligned}
\sup_{r > 0} {} & \biggl( \int_0^{r} s^{N-1} \we(s)^p 
		\exp\biggl( -\Ca p \int_0^{s} \lcw(\nu) \, \frac{d\nu}{\nu} \biggr)
		\, ds \biggr)^{1/p}\\
	& \qquad \cdot \biggl( \int_r^{\infty} s^{-1 - Np'/p} \wt(s)^{-p'} 
			\exp\biggl( \Ca p' \int_0^{s} \lcw(\nu) \, \frac{d\nu}{\nu} \biggr)
			ds
		\biggr)^{1/p'} < \infty,
\end{aligned}
\end{equation}
where~$N/M < p < N$. Then, if\/~$f \in \BLg$,
the equation in~{\rm(}\ref{eq:maineq}{\rm)} has a solution\/~$u \in \Lpg$ such that
\[
\| u \|_{\Lpg}  \leq  C \| f \|_{\BLg},
\]
where the constant~$C$ depends on~$N$,~$p$, the doubling constants of~$\we$ and~$\wt$, and the supremums above.
\end{theorem}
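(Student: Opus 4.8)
The plan is to reduce the weighted global statement to the local existence result from Theorem~1.1 of~\cite{thim3} by a dyadic decomposition of $\R^N$ together with the doubling hypotheses on $\we$ and $\wt$. First I would fix $f \in \BLg$ and, using~\eqref{eq:i:cond_eq_norm} and Lemma~\ref{l:eq_norm}, replace $\|f\|_{\BLg}$ by the equivalent norm $\|\nabla f\|_{\Lpg[\wt]}$, so that the target estimate becomes $\|u\|_{\Lpg} \leq C\|\nabla f\|_{\Lpg[\wt]}$. Since $\lcwz \leq \lcwm$ by hypothesis, Theorem~1.1 of~\cite{thim3} supplies a solution $u \in \Wloc$ of $\S u = f$ in the relevant local space, unique under the stated restrictions; the issue is purely to show that this $u$ satisfies the global weighted bound. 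The mechanism from~\cite{thim3} expresses the local control of $u$ in terms of seminorms weighted by powers of $|x|$ and by the exponential factor $\exp\bigl(\pm \Ca p \int_0^{s}\lcw(\nu)\,d\nu/\nu\bigr)$ coming from the Dini integral of $\lcw$; this is precisely why that factor reappears in~\eqref{eq:i:cond_J2}.

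The core of the argument is to run, on each dyadic annulus $A_k = \{2^k \leq |x| < 2^{k+1}\}$, the local estimate for $\S$, which bounds a weighted $L^p$ norm of $u$ over $A_k$ by a sum of contributions: a ``main'' term involving $\nabla f$ over comparable annuli, and ``tail'' terms — one from the region near the origin and one from infinity — each carrying a power of $r = 2^k$ and, in the tail estimates, the exponential weight. I would then multiply the annular estimate by $\we(2^k)^p$, which by doubling is comparable to $\we(x)^p$ on $A_k$, sum over $k \in \Z$, and interchange the order of summation. After this interchange, each tail contribution becomes a discrete Hardy-type operator applied to the sequence $\bigl(\|\nabla f\|_{L^p_\wt(A_j)}\bigr)_j$: the near-origin tail is controlled by the supremum in~\eqref{eq:i:cond_J1} (this is exactly a Muckenhoupt/discrete-Hardy condition pairing $s^{p'(M-N/p)-1}\wt^{-p'}$ against $s^{N-1-Mp}\we^p$), while the far-field tail with the exponential factor is controlled by~\eqref{eq:i:cond_J2}. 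The boundedness of the discrete Hardy operators under these supremum conditions is the standard Muckenhoupt criterion referenced in the introduction, applied to the dyadic sequences and then converted back to integrals via doubling.

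The main obstacle I expect is bookkeeping the exponential weight $\exp\bigl(\Ca p'\int_0^{s}\lcw(\nu)\,d\nu/\nu\bigr)$ correctly through the summation: one must check that the local estimates from~\cite{thim3} genuinely produce this factor with the right sign in the far-field tail and its reciprocal in the near-origin contribution (so that the two halves of~\eqref{eq:i:cond_J2} have matching exponents), and that the Dini condition $\int_0^1 \lcw(\nu)\,d\nu/\nu < \infty$ makes the factor bounded near $0$ so that it causes no divergence there. A secondary technical point is ensuring that the doubling constants, which enter each time $\we(x)$ or $\wt(x)$ is replaced by $\we(2^k)$ or $\wt(2^k)$ across a bounded number of neighbouring annuli, aggregate into a single constant $C$ depending only on $N$, $p$, the doubling constants, and the three supremums — and in particular that the number of neighbouring annuli coupled by the kernel $|\ld[x]-\ld[y]|^{-(N-1)}$ is controlled uniformly, which is where the smallness $\lcwz \leq \lcwm$ and the normalizations $\Ca\lcwm \leq 1/2$, $\Cc\lcwm \leq (N-1)/2$ are used. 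Once these are in place, the estimate $\|u\|_{\Lpg} \leq C\|f\|_{\BLg}$ follows, and uniqueness within $\Lpg$ is inherited directly from the local uniqueness in~\cite{thim3}.
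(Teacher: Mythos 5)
Your proposal follows essentially the same route as the paper: obtain the solution and its annular seminorm estimate from Theorem~1.1 of~\cite{thim3} (Theorem~\ref{t:exist} here), convert that estimate into the weighted bound $\| u \|_{\Lpg} \leq C \| \nabla f \|_{\Lpg[\wt]}$ via Muckenhoupt-type Hardy inequalities encoded in~(\ref{eq:i:cond_J1})--(\ref{eq:i:cond_J2}) together with the doubling hypotheses, and finish with Lemma~\ref{l:eq_norm} and~(\ref{eq:i:cond_eq_norm}); the paper does exactly this in Proposition~\ref{p:inverse_cont} using the continuous Hardy inequality of Lemma~\ref{l:hardy} rather than your dyadic discretization, which is only a cosmetic difference. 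The one step you gloss over is verifying that $f \in \Yonew$ so that the local existence theorem is applicable in the first place; in the paper this is Remark~\ref{r:Yonew}, and it indeed follows from~(\ref{eq:i:cond_J1}),~(\ref{eq:i:cond_J2}), and~(\ref{eq:i:cond_eq_norm}) by H\"older's inequality as in Lemma~\ref{l:inclusions}.
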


\noindent
We wish to remark here that in the case when~$\we = \wt$, the condition in~(\ref{eq:i:cond_eq_norm}) implies that~(\ref{eq:i:cond_J2}) is true.

\begin{theorem}
\label{t:i:uniq}
Suppose that\/~$u \in \Lpg$, where\/~$N/M < p < N$, and that
\begin{equation}
\label{eq:i:uniq_zero}
\we^{-1}(r) = O(r^{N/p - N + \Cc \lcwz}), \quad \mbox{as } r \rightarrow 0,
\end{equation}
and
\begin{equation}
\label{eq:i:uniq_infty}
\we^{-1}(r) = O \biggl( r^{N/p} 
		\exp\biggl( -\Ca \int_1^{r} \lcw(\nu) \, \frac{d\nu}{\nu} \biggr) \biggr), 
		\quad \mbox{as } r \rightarrow \infty.
\end{equation}
If\/~$\S u = 0$, then it follows that\/~$u = 0$.
\end{theorem}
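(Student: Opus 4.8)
The plan is to reduce the uniqueness statement to the corresponding local result, Theorem~1.2 of~\cite{thim3}, by showing that the decay conditions~(\ref{eq:i:uniq_zero}) and~(\ref{eq:i:uniq_infty}) on~$\we^{-1}$ force a function~$u \in \Lpg$ to lie in whatever local (unweighted, or fixed-power-weight) space is required in order to apply that earlier uniqueness theorem. First I would recall precisely which class of functions $u$ the local result of~\cite{thim3} covers; this is presumably a weighted $L^p$ space with a prescribed power weight, say $u \in L^p_{\sigma}$ with $\sigma(r)$ comparable to $r^{N/p - N + \Cc\lcwz}$ near the origin and to $r^{N/p}\exp(-\Ca\int_1^r \lcw(\nu)\,d\nu/\nu)$ near infinity — exactly the reciprocals of the right-hand sides in~(\ref{eq:i:uniq_zero})--(\ref{eq:i:uniq_infty}). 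Then the core estimate is a Hölder argument: write
\[
\int_{\R^N} \sigma(x)^p |u(x)|^p \, dx
 = \int_{\R^N} \bigl(\sigma(x)\we(x)^{-1}\bigr)^p \, \we(x)^p |u(x)|^p \, dx,
\]
and observe that the factor $\sigma(x)\we(x)^{-1}$ is bounded for small $|x|$ and for large $|x|$ by~(\ref{eq:i:uniq_zero}) and~(\ref{eq:i:uniq_infty}) respectively, hence bounded on all of $\R^N\setminus\{0\}$ (radiality and doubling of $\we$ handle the intermediate region). Therefore $\|u\|_{L^p_\sigma} \le C \|u\|_{\Lpg} < \infty$, so $u$ belongs to the class in which local uniqueness holds.

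Once $u$ is in the admissible local class, I would invoke Theorem~1.2 of~\cite{thim3} directly: since $\lcwz \le \lcwm$ by our standing assumption and $N/M < p < N$, that theorem asserts that $\S u = 0$ implies $u = 0$ for any $u$ in the relevant weighted space. This gives the conclusion with no further work. A small point to check is that the constants $\Ca$ and $\Cc$ appearing in~(\ref{eq:i:uniq_zero})--(\ref{eq:i:uniq_infty}) are the same ones fixed in~\cite{thim3} and carried through this article, and that $M = N - \Cc\lcwz$ matches the exponent in the origin condition; both are guaranteed by the conventions set up just before the theorem statement.

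The main obstacle I anticipate is not the Hölder reduction itself but verifying that the growth/decay hypotheses are exactly matched to the local result — in particular that the ``big-$O$'' conditions (rather than precise equalities) are enough, which they are since only an upper bound on $\sigma\we^{-1}$ is needed, and that the exponential factor $\exp(-\Ca\int_1^r \lcw\,d\nu/\nu)$ is handled consistently at $r\to\infty$ (here the Dini condition on $\lcw$ ensures the integral $\int_1^r$ is finite for each $r$ but may diverge as $r\to\infty$, which is precisely why the condition is phrased as a decay rate on $\we^{-1}$). I would also note, for completeness, that no condition at infinity of Muckenhoupt type is needed for uniqueness — unlike the existence Theorem~\ref{t:i:exist} — because uniqueness only requires controlling a single function $u$, not inverting the operator.
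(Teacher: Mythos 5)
Your overall strategy is the same as the paper's: reduce to the local uniqueness theorem of~\cite{thim3} (Theorem~\ref{t:uniq} here) by showing that $u \in \Lpg$ together with~(\ref{eq:i:uniq_zero})--(\ref{eq:i:uniq_infty}) places $u$ in the class that theorem covers. Your pointwise weight-comparison is indeed how the paper (Proposition~\ref{p:uniq}) obtains the required decay of the seminorms: doubling of $\we$ gives $\Np{u}{r} \leq C r^{-N/p}\we(r)^{-1}\|u\|_{\Lpg}$, hence $\Np{u}{r} = O(r^{-M})$ as $r \rightarrow 0$ and $\Np{u}{r} = O\bigl(\exp\bigl(-\Ca\int_1^r \lcw(\nu)\,d\nu/\nu\bigr)\bigr)$ as $r \rightarrow \infty$.

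There is, however, a gap coming from your guess about the hypotheses of the cited result. Theorem~1.2 of~\cite{thim3} is not stated for a single weighted $L^p$ space: besides the two big-$O$ conditions on the seminorms it requires $u$ to satisfy~(\ref{eq:defXp}), i.e.\ $u \in \Xp$, and this integrability condition does \emph{not} follow from the decay estimates you establish. At infinity the decay bound is only $\exp\bigl(-\Ca\int_1^r \lcw(\nu)\,d\nu/\nu\bigr)$, which (since $\Ca\lcwz \leq 1/2$) decays at best like a small negative power of $r$, so it never makes $\int_1^{\infty}\Np{u}{s}\,ds$ finite by itself; the membership $u \in \Xp$ must be verified separately from the full norm $\|u\|_{\Lpg}$, which the paper does by H\"older's inequality on dyadic shells (Lemma~\ref{l:inclusions}, condition~(\ref{eq:cond_LpgXp})), using the hypotheses on $\we^{-1}$ directly. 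This omission is fixable with the same ingredients, but as written your argument invokes the local theorem without checking one of its hypotheses. A minor further slip: for the factor $\sigma\we^{-1}$ to be bounded, your auxiliary weight $\sigma$ must be the \emph{reciprocal} of the right-hand sides of~(\ref{eq:i:uniq_zero})--(\ref{eq:i:uniq_infty}), not comparable to them, as the two halves of your sentence introducing $\sigma$ currently conflict.
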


\noindent We prove Theorems~\ref{t:i:exist} and~\ref{t:i:uniq} in Section~\ref{s:mainiso},
and as an application of these theorems,
we prove in Section~\ref{s:isomorphism} the following isomorphism result.

\begin{theorem}
\label{t:Lisomorphism}
If~$\we$ and~$\wt$ are 
functions differentiable almost everywhere such that
\begin{equation}
\label{eq:total_lip_we}
\Ca \lcwz - \frac{N}{p} < \essinf_{r > 0} \frac{r\we'(r)}{\we(r)} \leq \esssup_{r > 0} \frac{r\we'(r)}{\we(r)} < N - \Cc\lcwz - \frac{N}{p},
\end{equation}
\begin{equation}
\label{eq:total_lip_wt}
1 - \frac{N}{p} < \essinf_{r > 0} \frac{r\wt'(r)}{\wt(r)} \leq \esssup_{r > 0} \frac{r\wt'(r)}{\wt(r)} < N - \Cc\lcwz - \frac{N}{p},
\end{equation}
and 
\begin{equation}
\label{eq:total_lip_wewt}
\max \left\{ \esssup_{r > 0}  \frac{\wt(r)}{\we(r)}, \; 
\esssup_{r > 0} \frac{\we(r)}{\wt(r)} \right\} < \infty,
\end{equation}
where~$N/M < p < N$,
then the operator~$\S$ is an isomorphism between~$\Lpg$ and $\BLg$.
\end{theorem}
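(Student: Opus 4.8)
The plan is to derive Theorem~\ref{t:Lisomorphism} from Theorems~\ref{t:i:exist} and~\ref{t:i:uniq} by showing that the pointwise logarithmic-derivative bounds in~(\ref{eq:total_lip_we}),~(\ref{eq:total_lip_wt}) and~(\ref{eq:total_lip_wewt}) imply all the Muckenhoupt-type conditions~(\ref{eq:i:cond_eq_norm}),~(\ref{eq:i:cond_J1}),~(\ref{eq:i:cond_J2}) as well as the decay conditions~(\ref{eq:i:uniq_zero}),~(\ref{eq:i:uniq_infty}). The basic mechanism is that a bound $a < r\we'(r)/\we(r) < b$ almost everywhere, integrated from $r_0$ to $r$, gives the two-sided comparison $\we(r) \asymp \we(r_0)(r/r_0)^{\theta}$ in the appropriate one-sided sense: more precisely $\we(r)/\we(r_0) \le (r/r_0)^{b}$ for $r \ge r_0$ and $\ge (r/r_0)^{a}$, with the reverse inequalities for $r \le r_0$. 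This reduces every integral of the form $\int_0^r s^{c-1}\we(s)^{\pm p}\,ds$ or $\int_r^\infty s^{c-1}\we(s)^{\pm p}\,ds$ to a comparison with the corresponding pure-power integral, which converges precisely when the exponent has the right sign; the products of such integrals over the split at radius $r$ then telescope and the $r$-dependence cancels, exactly as in the unweighted Hardy-inequality computation. I would first state and prove this monotonicity/comparison lemma as an internal step (or cite it if it already appears in~\cite{thim1} or~\cite{thim3}).

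Next I would verify each condition in turn. For~(\ref{eq:i:cond_eq_norm}): the first integral $\int_0^r s^{N-1-p}\wt(s)^p\,ds$ converges near $0$ and is controlled by $r^{N-p}\wt(r)^p$ up to constants precisely when $N-p+p\cdot\essinf r\wt'/\wt > 0$, i.e.\ when $\essinf r\wt'(r)/\wt(r) > 1 - N/p$, which is the left inequality in~(\ref{eq:total_lip_wt}); the second (tail) integral $\int_r^\infty s^{-(N-1)/(p-1)}\wt(s)^{-p'}\,ds$ converges and is controlled by $r^{1-(N-1)/(p-1)}\wt(r)^{-p'}$ when $-(N-1)/(p-1) - p'\,\esssup r\wt'/\wt < -1$, i.e.\ when $\esssup r\wt'(r)/\wt(r) < N - N/p$, which is implied by the right inequality in~(\ref{eq:total_lip_wt}) since $\Cc\lcwz \ge 0$. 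Multiplying the two bounds, the powers of $r$ and the factors $\wt(r)^{\pm}$ cancel, giving a finite supremum. The same bookkeeping applied to~(\ref{eq:i:cond_J1}) uses both~(\ref{eq:total_lip_we}) and~(\ref{eq:total_lip_wt}): convergence of $\int_0^r s^{p'(M-N/p)-1}\wt(s)^{-p'}\,ds$ near $0$ needs $p'(M-N/p) - p'\,\essinf r\wt'/\wt > 0$, i.e.\ $\essinf r\wt'(r)/\wt(r) < M - N/p = N - \Cc\lcwz - N/p$, which is the right inequality in~(\ref{eq:total_lip_wt}); convergence of the tail $\int_r^\infty s^{N-1-Mp}\we(s)^p\,ds$ needs $N - Mp + p\,\esssup r\we'/\we < 0$, i.e.\ $\esssup r\we'(r)/\we(r) < Mp/p - N/p = M - N/p$ — wait, more carefully $N-1-Mp + p\cdot\esssup r\we'/\we < -1$, i.e.\ $\esssup r\we'(r)/\we(r) < M - N/p = N - \Cc\lcwz - N/p$, the right inequality in~(\ref{eq:total_lip_we}); and then the product telescopes because the exponents were chosen so the split point cancels.

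Condition~(\ref{eq:i:cond_J2}) is the genuinely new feature, since it involves the Dini factor $\exp(\pm c\int_0^s \lcw(\nu)\,d\nu/\nu)$. Here the key observation is that the function $s \mapsto \exp(-\Ca\int_0^s \lcw(\nu)\,d\nu/\nu)$ is itself a positive, (eventually) slowly-varying factor: since $\lcw(\nu) \le \lcwz$ we have $\int_0^s \lcw(\nu)\,d\nu/\nu \le \lcwz\log(s/s_0) + O(1)$ near infinity, so the exponential of $-\Ca$ times it is bounded below by $c\,s^{-\Ca\lcwz}$ near $\infty$ and bounded above by $1$, and near $0$ it tends to $1$. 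Combining $\we(s)^p$ (via~(\ref{eq:total_lip_we})) with this factor, the integrand of the first factor of~(\ref{eq:i:cond_J2}) behaves at worst like $s^{N-1}\we(s)^p$, whose integral over $(0,r)$ converges near $0$ under the left inequality in~(\ref{eq:total_lip_we}) (which is $\essinf r\we'/\we > \Ca\lcwz - N/p$, arranged precisely to absorb the exponential) and is comparable to $r^N\we(r)^p \exp(-\Ca p\int_0^r\lcw\,d\nu/\nu)$; similarly the tail factor's integrand is $\asymp s^{-1-Np'/p}\wt(s)^{-p'}\exp(\Ca p'\int_0^s\lcw\,d\nu/\nu)$, whose integral over $(r,\infty)$ converges at $\infty$ because $-Np'/p - p'\,\esssup r\wt'/\wt + \Ca p'\lcwz < 0$ follows from $\esssup r\wt'/\wt < N/p - \Ca\lcwz$ — which is guaranteed by~(\ref{eq:total_lip_wt}) together with the relation $\Ca\lcwm \le 1/2$ only if one is careful; in fact the cleanest route is to note that the product of the two factors in~(\ref{eq:i:cond_J2}), after the comparison, reduces to the product appearing in~(\ref{eq:i:cond_J1}) up to the cancelling Dini factors $\exp(\mp\Ca\int_0^r)$, so~(\ref{eq:i:cond_J1}) plus the sign conditions already established imply~(\ref{eq:i:cond_J2}). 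I would phrase this as: the Dini exponentials attached to the two halves of~(\ref{eq:i:cond_J2}) are mutually reciprocal at the split radius $r$ and hence cancel in the product, after which~(\ref{eq:i:cond_J2}) follows from the same computation as~(\ref{eq:i:cond_J1}); here the remark after Theorem~\ref{t:i:exist} (that~$\we=\wt$ plus~(\ref{eq:i:cond_eq_norm}) gives~(\ref{eq:i:cond_J2})) together with~(\ref{eq:total_lip_wewt}) can be invoked to reduce to that case. Finally the uniqueness hypotheses~(\ref{eq:i:uniq_zero}) and~(\ref{eq:i:uniq_infty}) are just the endpoint one-sided growth statements $\we(r)^{-1} = O(r^{N/p-N+\Cc\lcwz})$ as $r\to0$ and $\we(r)^{-1}=O(r^{N/p}\exp(-\Ca\int_1^r\lcw\,d\nu/\nu))$ as $r\to\infty$, which follow by integrating the one-sided bounds in~(\ref{eq:total_lip_we}): the left inequality there controls the behavior as $r\to\infty$ (via $r\we'/\we > \Ca\lcwz - N/p$, giving the desired exponential-corrected power) and the right inequality controls $r\to 0$. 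Having checked every hypothesis, Theorem~\ref{t:i:exist} gives that $\S\colon \Lpg\to\BLg$ is bounded below — more precisely that for every $f\in\BLg$ there is $u\in\Lpg$ with $\S u = f$ and $\|u\|_{\Lpg}\le C\|f\|_{\BLg}$ — so $\S$ is surjective with continuous (many-valued a priori) right inverse, while Theorem~\ref{t:i:uniq} gives injectivity; it remains only to note that $\S$ is bounded $\Lpg\to\BLg$ (which is the estimate underlying Theorems~1.1–1.2 of~\cite{thim3}, or can be cited from~\cite{thim3} directly), and then the open mapping theorem upgrades the algebraic bijection to a topological isomorphism. The main obstacle, as indicated, is bookkeeping the interaction of the Dini exponential with the weight in~(\ref{eq:i:cond_J2}) and confirming that the constants $\Ca,\Cc,\lcwm$ satisfy the inequalities needed for the exponents to have the right signs; everything else is the standard Hardy-type power-counting made two-sided via the logarithmic-derivative bounds.
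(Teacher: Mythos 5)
Your overall strategy—integrate the logarithmic-derivative bounds to get two-sided power comparisons, verify the Muckenhoupt-type conditions of Theorems~\ref{t:i:exist} and~\ref{t:i:uniq}, and conclude injectivity plus bounded solvability—is the same route the paper takes (it packages the power-counting in Lemma~\ref{l:PI} and Corollaries~\ref{c:eq_norm}, \ref{c:S_cont}, \ref{c:Linverse_cont}, and your ``cancellation at the split radius'' treatment of~(\ref{eq:i:cond_J2}) is exactly the paper's device of absorbing the Dini exponential into the weight, i.e.\ taking $g(s)=\we(s)\exp(-\Ca\int_0^s\lcw(\nu)\,d\nu/\nu)$). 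However, there is one genuine gap: the forward mapping property. You dismiss the boundedness of $\S\colon\Lpg\to\BLg$ as ``the estimate underlying Theorems~1.1--1.2 of~\cite{thim3}, or can be cited from~\cite{thim3} directly,'' but~\cite{thim3} only provides seminorm estimates on the local spaces $\Xp$ and $\Yonew$ (the formula~(\ref{eq:diff_Su}) and the bound~(\ref{eq:est_Tk})); it contains no two-weight estimate, and the statement $\|\nabla\S u\|_{\Lpg[\wt]}\le C\|u\|_{\Lpg}$ is precisely one of the new results of the present paper (Proposition~\ref{p:S_cont}), requiring its own Hardy-inequality argument with the additional conditions~(\ref{eq:cond_SJ1})--(\ref{eq:cond_SJ2}), which your hypotheses do imply (via~(\ref{eq:total_lip_we})--(\ref{eq:total_lip_wewt}), as in Corollary~\ref{c:S_cont}) but which you never verify. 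Nor can the open mapping theorem substitute for this step: to apply the bounded-inverse theorem to the solution operator you still need the set-theoretic inclusion $\S(\Lpg)\subseteq\BLg$ (and, before that, $\Lpg\subset\Xp$ so that $\S u$ is even defined), and there is no soft, qualitative way to obtain this—it is the same estimate again. So the missing piece is a verification, from~(\ref{eq:est_diff_Su}) and Lemma~\ref{l:hardy}, that the conditions~(\ref{eq:total_lip_we})--(\ref{eq:total_lip_wewt}) give the continuity of $\S$ itself.

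Two smaller points. In your direct power count for~(\ref{eq:i:cond_J2}) the convergence of the tail $\int_r^\infty s^{-1-Np'/p}\wt(s)^{-p'}\exp(\Ca p'\int_0^s\lcw\,d\nu/\nu)\,ds$ is governed by the \emph{lower} bound on $r\wt'(r)/\wt(r)$ (one needs $\essinf r\wt'/\wt>\Ca\lcwz-N/p$, which follows from $1-N/p$ and $\Ca\lcwm\le 1/2$), not by an upper bound of the form $\esssup r\wt'/\wt<N/p-\Ca\lcwz$ as you wrote; your fallback cancellation argument is the correct repair and is essentially Corollary~\ref{c:Linverse_cont}, whereas the alternative reduction through the unproved remark after Theorem~\ref{t:i:exist} is best avoided. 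Also, in your verification of~(\ref{eq:i:cond_J1}) the bound on $\int_0^r s^{p'(M-N/p)-1}\wt(s)^{-p'}ds$ uses the \emph{esssup} (not essinf) of $r\wt'/\wt$; this is only a slip of wording since you invoke the correct right-hand inequality of~(\ref{eq:total_lip_wt}).
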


\noindent Note that~(\ref{eq:total_lip_wt}) implies~(\ref{eq:i:cond_eq_norm}); see Corollary~\ref{c:eq_norm}. 
Moreover, if~$\we = \wt$, then only~(\ref{eq:total_lip_wt}) is necessary out of~(\ref{eq:total_lip_we})--(\ref{eq:total_lip_wewt}) 
for Theorem~\ref{t:Lisomorphism} to hold. 

In Section~\ref{s:applications}, we consider some explicit examples of weights. In particular, we present an 
application of Theorem~\ref{t:Lisomorphism} where the weights are equal
and given by a power exponential function, i.e.,~$\we(r) = \wt(r) = r^{\alpha}$, 
where~$\alpha \in \R$ satisfies  
\begin{equation}
\label{eq:i:alpha}
1 < \alpha + \frac{N}{p} < N - \Ca \lcwz.
\end{equation}
Theorem~\ref{t:power} states that~$\S$ is an isomorphism between~$\Lpg$ and~$\BLg$ when~$\alpha$ satisfies~(\ref{eq:i:alpha}).
We specifically note that with~$\alpha = 0$ (which satisfies the condition above), we obtain an
isomorphism between~$L^p(\R^N)$ and the homogeneous Sobolev space~$\BLg$ (with~$\wt = 1$).
Moreover, we also note that these results reduce to the corresponding results for Riesz potentials in
the case when~$\lcwz = 0$, i.e., the hyperplane case~$x_{N+1} = 0$; we refer to Section~8 in~\cite{thim1}.

Throughout this paper, the constant~$C$ is a generic constant that can change from line to line, but only depends
on the parameters, e.g.,~$N$,~$p$, and the weights~$\we$ and~$\wt$. 


\section{Preliminary Results}
\label{s:prelim}

\subsection{Single Layer Potentials on Lipschitz Surfaces}
Let us recall some properties of single layer potentials on Lipschitz surfaces that were discussed
in~\cite{thim3}. The case when~$x_{N+1} = 0$, where the single layer potential becomes a Riesz potential, 
is covered in~\cite{thim1}.

In our previous analysis, results were formulated in terms of the family of seminorms defined by
\[
\Np{u}{r} = \biggl( \frac{1}{r^N} 
	\int_{r \leq |x| < 2r} |u(x)|^p \, dx \biggr)^{1/p}
\text{,} \quad r > 0  \text{.}
\]
The Banach space~$\Xp[1]$ consists of all functions~$u \in \Lloc[p]$ such that
\begin{equation}
\label{eq:defXp}
\int_{0}^{1} \rr^N \, \Np{u}{\rr} \, \frac{d\rr}{\rr} 
+
\int_{1}^{\infty} \Np{u}{\rr} \, d\rr 
< \infty,
\end{equation}
where the left-hand side defines the norm on~$\Xp$.
This space is the natural domain, in terms of the seminorms~$\Npw$, 
for the operator~$\S$ in the case that~$S$ is the hyperplane~$x_{N+1} = 0$; 
this is discussed further in~\cite{thim1}. 
For~$0 < \Nhi \leq N$, 
the Banach space~$\Yonew$ consists of all functions~$f$ in~$\Wloc$ such that
\begin{equation}
\label{eq:Yonew}
\int_0^{1} \rr^{\Nhi} \, \Np{\nabla f}{\rr} \, \frac{d\rr}{\rr}
+ \int_1^{\infty} \Np{\nabla f}{\rr} \, d\rr < \infty
\end{equation}
and~$\lim_{r \rightarrow \infty} \int_{S^{N-1}} f(r\theta) \, dS(\theta) = 0$, where~$S^{N-1}$ is the unit sphere in~$\R^N$
and~$dS$ is the Euclidian surface measure.
The left-hand side of~(\ref{eq:Yonew}) defines the norm on this space. 
We note that~(\ref{eq:Yonew}) implies that the limit in the definition exists.
The {\it raison d'\^{e}tre} for the space~$\Yonew$ is that if~$M = N - \Cc \lcwz$, then solutions to~(\ref{eq:maineq}) 
exist; see Theorem~\ref{t:exist} below. 
For a comparison of these spaces with weighted Lebesgue spaces and weighted Sobolev spaces, 
see Lemma~\ref{l:inclusions}.

The single layer potential $\S u$ is weakly differentiable if~$u \in \Xp$ and
\begin{equation}
\label{eq:diff_Su}
\partial_k \S u(x) = (1-N)\bigl( T_k u(x) + \partial_k \vp (x)T_{N+1} u(x) \bigr)
\end{equation}
for $x \in \R^N$ and $k = 1, 2, \ldots, N$, where~$T_k$ are the singular integral operators 
defined by 
\[
T_{k} u(x) = \pval \int_{\R^N} 
\frac{(\ld[x] - \ld[y])_k}{|\ld[x] - \ld[y]|^{N+1}} \, u(y) \sqrt{1 + |\nabla \vp(y)|^2} \, dy
\text{,} \quad x \in \R^N \text{,}
\]
for~$k=1,2,\ldots,N+1$. Here,~$( \ld[x] )_k$ denotes the $k$th component of the vector~$\ld[x]$. 
In Section~2.2 of~\cite{thim3}, it is shown that if
$u \in \Lloc[p]$, $1 < p < \infty$, satisfies
\begin{equation}
\label{eq:defTk}
\int_{0}^{1} \rr^N \, \Np{u}{\rr} \, \frac{d\rr}{\rr} 
+
\int_{1}^{\infty} \Np{u}{\rr} \, \frac{d\rr}{\rr}  < \infty,
\end{equation}
then~$T_k u$ is defined almost everywhere and
\begin{equation}
\label{eq:est_Tk}
\Np{T_k u}{r} \leq C \int_0^{r} \left( \frac{\rr}{r} \right)^N \, \Np{u}{\rr} \, \frac{d\rr}{\rr} 
+ C \int_r^{\infty}  \Np{u}{\rr} \, \frac{d\rr}{\rr} 
\quad \mbox{for } r > 0,
\end{equation}
for~$k=1,2,\ldots,N+1$, where~$C$ only depends on~$N$ and~$p$.

Furthermore, in~\cite{thim3}, the following existence and uniqueness results were proved.

\begin{theorem}
\label{t:exist}
There exist positive constants\/~$\lcwm$,\/~$\Ca$,\/~$\Cc$, and\/~$\Cb$, depending only on\/~$N$
and\/~$p$, such that if\/~$\lcwz \leq \lcwm$ and 
if\/~$f \in \Yonew$ with\/~$M = N - \Cc \lcwz$ and~\/$1 < p < \infty$,
then~{\rm(}\ref{eq:maineq}{\rm)} has a solution\/~$u$ in~$\Xp$.
For\/~$r > 0$, this solution satisfies
\begin{equation}
\label{eq:est_sol}
\begin{aligned}
\Np{u}{r} \leq {} & \Cb \int_0^{r} \left( \frac{\rr}{r} \right)^{\Nhi} \Np{\nabla f}{\rr} \, \frac{d\rr}{\rr}\\
& + \Cb \int_r^{\infty} \exp \biggl( 
\Ca \int_r^{\rr} \lcw(\nu) \, \frac{d\nu}{\nu}
\biggr) \, \Np{\nabla f}{\rr} \, \frac{d\rr}{\rr}.
\end{aligned}
\end{equation}
\end{theorem}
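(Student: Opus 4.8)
The plan is to turn~(\ref{eq:maineq}) into a fixed-point equation for~$u$ expressed through~$\nabla f$, treating~$S$ as a small Lipschitz perturbation of the hyperplane~$x_{N+1}=0$, for which~$\S$ is the Riesz potential of order one and is invertible on the seminorm scale~$\Npw$ (the content of~\cite{thim1}). Concretely: if~$u \in \Xp$ solves~$\S u = f$, then differentiating via~(\ref{eq:diff_Su}) gives~$\partial_k f = (1-N)\bigl(T_k u + \partial_k\vp\, T_{N+1}u\bigr)$ for~$k=1,\dots,N$. Writing~$T_k = T_k^0 + (T_k - T_k^0)$ with~$T_k^0$ the operator obtained for~$\vp \equiv 0$, one has~$|\ld[x]-\ld[y]| = |x-y|$ and~$(\ld[x]-\ld[y])_{N+1} = 0$ in that case, so~$T_k^0$ is a fixed constant times the~$k$th Riesz transform~$R_k$ and~$T_{N+1}^0 = 0$. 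Using~$\sum_k R_k^2 = -I$ to invert the flat part then puts~$u$ in the form~$u = Af + Bu$, where~$Af = \mathrm{const}\cdot\sum_k R_k\partial_k f$ is bounded from~$\Yonew$ into~$\Xp$ and~$B$ collects the error terms~$(T_k - T_k^0)u$ and~$\partial_k\vp\, T_{N+1}u$. Each of these is controlled, at scale~$r$, by the Lipschitz modulus~$\lcw(r)$, so~(\ref{eq:est_Tk}) yields a seminorm bound
\[
\Np{Bu}{r} \leq C\,\lcw(r)\left( \int_0^{r}\Bigl(\frac{\rr}{r}\Bigr)^{N}\Np{u}{\rr}\,\frac{d\rr}{\rr} + \int_{r}^{\infty}\Np{u}{\rr}\,\frac{d\rr}{\rr}\right),
\]
so~$B$ maps~$\Xp$ into itself with a small contribution when~$\lcwz$ is small.

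Since~$\Xp$ with the seminorms~$\Npw$ is locally convex but not normable, I would apply the fixed-point theorem in locally convex spaces from~\cite{thim2} to the affine map~$u \mapsto Af + Bu$; this is exactly why~$\lcwz$ must lie below a threshold~$\lcwm$, and why it is convenient to assume~$\Ca\lcwm \leq 1/2$ and~$\Cc\lcwm \leq (N-1)/2$, as these make the near-diagonal part of~$B$ a genuine contraction that can be absorbed (the inner integral then acquires the homogeneity exponent~$\Nhi = N - \Cc\lcwz$ of~(\ref{eq:est_sol}), which the second condition keeps positive). Iterating the fixed-point relation and resumming the geometric series reduces the estimate for~$g(r) = \Np{u}{r}$ to an integral inequality of the shape
\[
g(r) \leq \Cb\int_0^{r}\Bigl(\frac{\rr}{r}\Bigr)^{\Nhi}\Np{\nabla f}{\rr}\,\frac{d\rr}{\rr} + \Cb\int_{r}^{\infty}\Np{\nabla f}{\rr}\,\frac{d\rr}{\rr} + \Ca\int_{r}^{\infty}\lcw(\rr)\,g(\rr)\,\frac{d\rr}{\rr},
\]
and a Gronwall-type argument on the outer integral converts the term~$\Cb\int_r^{\infty}\Np{\nabla f}{\rr}\,d\rr/\rr$ into the exponentially weighted term in~(\ref{eq:est_sol}). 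Feeding~(\ref{eq:est_sol}) back into~(\ref{eq:defXp}) and using~$f\in\Yonew$ together with the Dini condition on~$\lcw$ then shows that~$u$ indeed belongs to~$\Xp$.

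It remains to verify that the~$u$ constructed this way solves~$\S u = f$ and not merely~$\nabla(\S u - f) = 0$. Since~$\S u - f \in \Wloc$ and the relevant domain is connected, the difference is a constant, and the normalization built into~$\Yonew$ --- namely~$\lim_{r\to\infty}\int_{S^{N-1}} f(r\theta)\,dS(\theta)=0$, matched against the corresponding decay of the spherical averages of~$\S u$ for~$u\in\Xp$ --- forces it to vanish. The constants~$\lcwm$,~$\Ca$,~$\Cc$,~$\Cb$ all emerge from this scheme (the singular integral bounds, the contraction threshold and the Gronwall step) and depend only on~$N$ and~$p$.

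The step I expect to be the main obstacle is the one in the second paragraph: extracting the sharp exponential weight by a Gronwall argument \emph{while} working in the non-normable space~$\Xp$, so that the fixed-point theorem genuinely applies and the resulting a priori bound is uniform in~$f$. The flat inversion and the scale-$r$ control of~$B$ by~$\lcw(r)$ are comparatively routine once~(\ref{eq:diff_Su}) and~(\ref{eq:est_Tk}) are in hand; one should only note in passing that~$\Xp$ is contained in the class~(\ref{eq:defTk}) on which~$T_k$ is defined, since~(\ref{eq:defXp}) controls even~$\int_1^{\infty}\Np{u}{\rr}\,d\rr$.
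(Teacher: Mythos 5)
You cannot be graded here against ``the paper's own proof'' for the simple reason that there is none: Theorem~\ref{t:exist} is not proved in this article but quoted from~\cite{thim3} (see the sentence immediately preceding it), and the present paper only \emph{uses} it. That said, your outline does track the strategy the introduction attributes to that reference: differentiate via~(\ref{eq:diff_Su}), invert the flat case (where $\S$ is a Riesz potential of order one and $T_k^0$ are Riesz transforms, as in~\cite{thim1}), treat the surface as a small Lipschitz perturbation quantified by $\lcw$, and close the argument with the fixed point theorem in locally convex spaces of~\cite{thim2}. In spirit you are reconstructing the right program.

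As a proof, however, the decisive steps are asserted rather than established. First, the bound $\Np{Bu}{r}\leq C\,\lcw(r)\bigl(\cdots\bigr)$ cannot be obtained from~(\ref{eq:est_Tk}): that estimate controls $T_ku$ itself and carries no smallness; the smallness of the perturbation requires kernel-difference estimates for $T_k-T_k^0$ and for $T_{N+1}$, which is precisely the technical core of~\cite{thim3} and not a routine consequence of what is quoted here. Moreover, for the far-field part the kernel difference at $|x|\sim r$, $|y|\sim s>r$ is governed by the Lipschitz constant at scale $s$, i.e.\ by $\lcw(s)\geq\lcw(r)$, so the prefactor $\lcw(r)$ on the tail integral is too strong to be true; the correct form has $\lcw(s)$ inside the tail integral, which is exactly what your subsequent Gronwall inequality silently assumes but your stated $B$-estimate does not deliver. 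Second, the degradation of the inner exponent from $N$ in~(\ref{eq:est_Tk}) to $\Nhi=N-\Cc\lcwz$ in~(\ref{eq:est_sol}) --- the very origin of the constant $\Cc$ --- is dispatched in a parenthetical remark; it has to be extracted from the iteration scheme and does not follow from anything you write down. Third, you yourself flag the Gronwall-plus-fixed-point step in the non-normable space $\Xp$, and the recovery of $\S u=f$ from $\nabla(\S u-f)=0$ via the spherical-mean normalization built into $\Yonew$, as the main obstacles; since these are exactly where $\lcwm$, $\Ca$, and $\Cb$ are produced, what you have is a plausible and essentially correctly oriented outline of the argument of~\cite{thim3}, not a self-contained proof of the theorem.
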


\begin{theorem}
\label{t:uniq}
Suppose that\/~$u \in \Lloc[p]$, where\/~$1 < p < \infty$, and that~$u$ satisfies~{\rm(}\ref{eq:defXp}{\rm)},
\begin{equation}
\label{eq:est_uniq_infty}
\Np{u}{r} = 
O\biggl( \exp \biggl( -\Ca \int_1^r \lcw(\nu) \, \frac{d\nu}{\nu} \biggr) \biggr)   \quad \text{as } r \rightarrow \infty \text{,}
\end{equation}
and
\begin{equation}
\label{eq:est_uniq_zero}
\Np{u}{r} = 
O\bigl( r^{-\Nhi} \bigr)  \quad \text{as } r \rightarrow 0 \text{,}
\end{equation}
where\/~$\Ca$,\/~$\Cc$,\/~$\Nhi$, and\/~$\lcwm$ are as in Theorem~\ref{t:exist}, and\/~$\lcwz \leq \lcwm$.
If\/~$\S u = 0$, then it follows that\/~$u = 0$.
\end{theorem}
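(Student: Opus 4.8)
The plan is to show that the hypotheses force the seminorms $g(r) := \Np{u}{r}$ to satisfy a homogeneous integral inequality of Volterra type whose only admissible solution in the prescribed decay class is $g \equiv 0$; since this means $u$ vanishes on every dyadic annulus, it gives $u = 0$. The natural starting point is the differentiation formula~(\ref{eq:diff_Su}): because $\S u = 0$ entails $\partial_k \S u = 0$ for every $k$, we obtain the pointwise system
\[
T_k u(x) = -\,\partial_k \vp(x)\, T_{N+1} u(x), \qquad k = 1, \dots, N.
\]
I would then regard~$\S$ as a perturbation of the flat single layer potential~$\S_0$, the Riesz potential of order one obtained when~$\vp \equiv 0$, for which~$T_{N+1}$ vanishes identically and the operator is explicitly invertible on functions with suitable decay. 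Rewriting $\S u = 0$ as $\S_0 u = -(\S - \S_0)u$ and applying the flat inverse yields the fixed-point identity $u = -\S_0^{-1}(\S - \S_0) u$.

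The second step turns this identity into an estimate for~$g$. Applying the seminorm bound~(\ref{eq:est_Tk}) to each $T_k$ and to $T_{N+1}$, and using that the perturbation $(\S - \S_0)u$ carries a factor controlled by~$\lcwz$ (through $|\partial_k \vp| \le \lcwz$ and through the local Lipschitz bound~$\lcw$, which governs the deviation of the kernel of~$\S$ from that of~$\S_0$), I expect to reach a self-referential bound
\[
g(r) \le C \lcwz \left( \int_0^r \left(\frac{s}{r}\right)^{M} g(s)\, \frac{ds}{s}
+ \int_r^\infty \exp\!\left( \Ca \int_r^s \lcw(\nu)\,\frac{d\nu}{\nu}\right) g(s)\, \frac{ds}{s} \right).
\]
This is precisely the homogeneous analogue of the a priori estimate~(\ref{eq:est_sol}), with $\Np{\nabla f}{s}$ replaced by a small multiple of~$g(s)$, the smallness being the gain extracted from $\lcwz \le \lcwm$.

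The third step runs this inequality against the boundary data supplied by the hypotheses. Condition~(\ref{eq:est_uniq_zero}), $g(r) = O(r^{-M})$ as $r \to 0$, controls the inner integral near the origin, while condition~(\ref{eq:est_uniq_infty}) matches the exponential kernel in the outer integral so that it converges; the integrability assumption~(\ref{eq:defXp}) keeps all quantities finite. Since we assume $\Ca \lcwm \le 1/2$ and $\Cc \lcwm \le (N-1)/2$, the map $g \mapsto \text{(right-hand side)}$ is a contraction on the weighted seminorm space determined by these two asymptotics, so iterating the inequality drives $g$ to zero. Hence $\Np{u}{r} = 0$ for every $r > 0$, and therefore $u = 0$.

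The main obstacle is the second step: recovering quantitative control of~$u$ itself, since only the gradient relation $T_k u = -\partial_k \vp\, T_{N+1} u$ is immediate. One must invert the flat operator~$\S_0$ on the class of functions with the prescribed asymptotics and verify that this inversion is both well defined and bounded there, which amounts to showing that~$\S_0$ admits no nonzero null functions within the decay class~(\ref{eq:est_uniq_infty})--(\ref{eq:est_uniq_zero}). The delicate point is the two-sided asymptotic matching: the exponent $M = N - \Cc\lcwz$ at the origin and the exponential weight at infinity are exactly the threshold rates for which the homogeneous Volterra inequality has no nontrivial solution, so the smallness of~$\lcwz$ must be used both to make the perturbation a contraction and to keep these critical exponents on the correct side.
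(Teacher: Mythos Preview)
This theorem is not proved in the present paper. It appears in Section~\ref{s:prelim} as a preliminary result quoted from~\cite{thim3}, introduced by the sentence ``Furthermore, in~\cite{thim3}, the following existence and uniqueness results were proved.'' There is therefore no in-paper proof against which to compare your proposal.

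As to the sketch itself: the perturbative strategy---writing $\S u = 0$ as $\S_0 u = -(\S-\S_0)u$, inverting the flat operator~$\S_0$, and deriving a contractive self-referential inequality for $g(r)=\Np{u}{r}$ via~(\ref{eq:est_Tk})---is the natural line and is in the spirit of the framework in~\cite{thim3}. Two points would need tightening before it becomes a proof. First, the identity $u = -\S_0^{-1}(\S-\S_0)u$ already presupposes uniqueness for~$\S_0$ in the given decay class; that flat case is handled in~\cite{thim1}, but you should invoke it explicitly rather than leave an apparent circularity. Second, an inequality of the form $g(r)\le C\lcwz\bigl(\int_0^r (s/r)^M g(s)\,ds/s + \int_r^\infty e^{\Ca\int_r^s \lcw\,d\nu/\nu} g(s)\,ds/s\bigr)$ with $C\lcwz<1$ does not yield $g\equiv 0$ by bare iteration: the integral operator is not pointwise bounded by a constant times~$g(r)$. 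One must specify a weighted norm on~$g$ (matched to the borderline rates $r^{-M}$ near~$0$ and $\exp(-\Ca\int_1^r \lcw\,d\nu/\nu)$ near~$\infty$) in which the operator has norm strictly below one, and verify that~(\ref{eq:est_uniq_zero}) and~(\ref{eq:est_uniq_infty}) place~$g$ in that space. You acknowledge this in your final paragraph, but the actual choice of weight and the operator-norm computation are the substance of the argument, not a detail.
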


\noindent We would like to point out that the solution in Theorem~\ref{t:exist} satisfies the conditions in Theorem~\ref{t:uniq}; 
see Remark~3.12 in~\cite{thim3}. 

\subsection{Weighted Inequalities}

Before proving the main results, we collect some technical lemmas.
To shift between the spaces~$\Xp$ and~$\Yonew$ and their weighted counterparts,
we will employ Lemma~2.1 in~\cite{thim1} which we now present to assist the reader.

\begin{lemma}
\label{l:wL1eqX1}
Suppose that $0 \leq M_1 < M_2 \leq \infty$. Then, for~$u \in L^1_{\rm{loc}}(\R^N \setminus \{0\})$,
\[
\begin{aligned}
 \int_{M_1 \leq |x| < M_2} \frac{|u(x)|}{|x|^{N-1}} \, dx \leq 
	C \int_{M_1/2}^{M_2} \Np[1]{u}{\rho} \, d\rho
	\leq \int_{{M_1}/{2} \leq |x| < 2M_2} \frac{|u(x)|}{|x|^{N-1}} \, dx \text{,}
\end{aligned}
\]
where~$C = (N-1)/(2^{N-1}-1)$.
\end{lemma}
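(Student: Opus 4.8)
The plan is to prove the two inequalities separately by slicing the region $\{M_1\le|x|<M_2\}$ into dyadic annuli and comparing each piece with the corresponding seminorm $\Np[1]{u}{\rho}$.

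First, for the left-hand inequality, I would cover the annulus $\{M_1\le|x|<M_2\}$ by the dyadic shells $A_j=\{2^j\le|x|<2^{j+1}\}$ for integers $j$ in the appropriate range (with the two end shells possibly only partially intersecting the annulus — this does no harm since we can enlarge the region to the union of full shells that meet it, and the endpoints $M_1/2$ and $M_2$ on the right-hand side of the claimed bound are chosen precisely to accommodate this). On each shell $\{2^j\le|x|<2^{j+1}\}$ we have $|x|^{-(N-1)}\le 2^{-j(N-1)}$, and writing $\rho=2^j$, the shell is exactly $\{\rho\le|x|<2\rho\}$, so $\int_{\rho\le|x|<2\rho}|u|\,dx = \rho^N\Np[1]{u}{\rho}$. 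Hence the shell integral is at most $2^{-j(N-1)}\rho^N\Np[1]{u}{\rho}=\rho\,\Np[1]{u}{\rho}$. Now I would convert the sum over $j$ into an integral: since $\Np[1]{u}{\rho}$ is being integrated $d\rho$ and $\int_{2^j}^{2^{j+1}}d\rho = 2^j = \rho$, summing $2^j\Np[1]{u}{2^j}$ over $j$ is comparable to $\int \Np[1]{u}{\rho}\,d\rho$ — but to be careful one should note that $\Np[1]{u}{2^j}$ is not the same as $\Np[1]{u}{\rho}$ for $\rho\in(2^j,2^{j+1})$, so the clean way is to go in the reverse direction, lower-bounding the integral, which is exactly what the right-hand inequality does. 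For the left inequality it is cleanest to write $\int_{2^j\le|x|<2^{j+1}}\frac{|u|}{|x|^{N-1}}\,dx$ and directly bound $|x|^{-(N-1)}$ from above by $2^{-j(N-1)}$, then observe $\rho^N\Np[1]{u}{\rho}=\int_{\rho\le|x|<2\rho}|u|$ with $\rho=2^j$, giving the bound $2^j\Np[1]{u}{2^j}\le\int_{2^{j-1}}^{2^j}\Np[1]{u}{\rho}\cdot(\text{something})$... In practice the constant $(N-1)/(2^{N-1}-1)$ comes from summing the geometric series $\sum 2^{-j(N-1)}$ against the overlap, so I would set up the dyadic estimate to make that geometric factor appear explicitly.

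For the right-hand inequality I would go the other way: given $\rho\in(M_1/2,M_2)$, the annulus $\{\rho\le|x|<2\rho\}$ is contained in $\{M_1/2\le|x|<2M_2\}$, and on it $|x|^{-(N-1)}\ge(2\rho)^{-(N-1)}$, so $\rho\,\Np[1]{u}{\rho}=\rho^{1-N}\int_{\rho\le|x|<2\rho}|u|\le 2^{N-1}\int_{\rho\le|x|<2\rho}\frac{|u|}{|x|^{N-1}}\,dx$. Then I would integrate in $\rho$ over $(M_1/2,M_2)$ and apply Fubini/Tonelli: $\int_{M_1/2}^{M_2}\int_{\rho\le|x|<2\rho}\frac{|u(x)|}{|x|^{N-1}}\,dx\,d\rho=\int\frac{|u(x)|}{|x|^{N-1}}\Bigl(\int_{\{\rho:\;\rho\le|x|<2\rho\}}d\rho\Bigr)dx$, and the inner $\rho$-measure is at most $|x|-|x|/2=|x|/2$, which combines with the $\rho^{-1}$ implicit weight... — more precisely, since we have $d\rho$ (not $d\rho/\rho$) on the right, I'd track that the set of admissible $\rho$ for fixed $x$ has length comparable to $|x|$, and the resulting factor balances against the $2^{N-1}$ to produce the stated constant.

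The main obstacle — though a mild one — is bookkeeping the constants and the shifted endpoints ($M_1/2$, $2M_2$) so that the dyadic covering on one side and the Fubini interchange on the other both land exactly on the advertised constant $C=(N-1)/(2^{N-1}-1)$ rather than some larger dimensional constant; this forces one to use the geometric-series sum sharply rather than crudely bounding term by term. Since this is quoted verbatim from Lemma~2.1 of~\cite{thim1}, I would in fact simply cite that reference and omit the computation, but the dyadic-slicing plus Tonelli argument above is the self-contained route.
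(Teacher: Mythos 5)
The paper itself gives no argument for this lemma---it is quoted from Lemma~2.1 of~\cite{thim1}---so your closing remark that you would simply cite that reference matches what the paper does. But the self-contained argument you sketch has two concrete problems. For the left inequality, the dyadic-shell decomposition is never brought to a conclusion: as you note yourself, $\Np[1]{u}{2^j}$ does not control $\Np[1]{u}{\rho}$ for $\rho\in(2^j,2^{j+1})$, and the argument trails off at exactly that point; moreover the heuristic that the constant comes from ``summing the geometric series $\sum 2^{-j(N-1)}$'' cannot be right, since that series sums to $2^{N-1}/(2^{N-1}-1)$, which contains no factor $N-1$. For the right inequality, the specific route you describe (first bound $|x|^{1-N}\ge(2\rho)^{1-N}$ on the annulus, then integrate in $\rho$, where the admissible $\rho$-set for fixed $x$ contributes $\int_{|x|/2}^{|x|}\rho^{-1}\,d\rho=\log 2$) produces the constant $2^{N-1}\log 2$; for $N=2$ this is $2\log 2>1=(2^{N-1}-1)/(N-1)$, so the ``balancing'' you hope for does not occur and the advertised constant is not recovered.

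The missing idea is that both inequalities, with the exact constant, follow from a single Tonelli computation in which $\rho^{-N}$ is kept inside the inner integral and evaluated exactly. Indeed,
\[
\int_{M_1/2}^{M_2}\Np[1]{u}{\rho}\,d\rho
=\int_{\R^N}|u(x)|\biggl(\int_{\{\rho\in[M_1/2,M_2)\,:\,\rho\le|x|<2\rho\}}\rho^{-N}\,d\rho\biggr)dx,
\]
and for fixed $x$ the admissible $\rho$ form a subset of $(|x|/2,\,|x|]$, with
\[
\int_{|x|/2}^{|x|}\rho^{-N}\,d\rho=\frac{2^{N-1}-1}{N-1}\,|x|^{1-N}=C^{-1}|x|^{1-N}.
\]
If $M_1\le|x|<M_2$, the whole interval $(|x|/2,\,|x|]$ lies in $[M_1/2,M_2)$, so the inner integral equals $C^{-1}|x|^{1-N}$, which gives the left inequality after multiplying by $C$; conversely, the outer integrand is supported in $M_1/2\le|x|<2M_2$ and the inner integral never exceeds $C^{-1}|x|^{1-N}$, which gives the right inequality. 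No dyadic decomposition is needed, and the endpoint shifts $M_1/2$ and $2M_2$ appear automatically from the support of the admissible $\rho$-sets.
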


\noindent
We also present the weighted Hardy inequalities we will rely on. The proof can be found in, e.g.,
Muckenhoupt~\cite{Muckenhoupt1972}.

\begin{lemma}
\label{l:hardy}
Suppose that~$g$,~$U$, and~$V$ are measurable functions on~$(0,\infty)$. Then
\begin{enumerate}
\item[{\rm (i)}] there exists a constant~$C_1$ such that
\begin{equation}
\label{eq:hardy1}
\biggl( \int_{0}^{\infty} \biggl| U(r) \int_{0}^{r} g(s) ds \biggr|^p dr \biggr)^{1/p} 
\leq
C_1 \biggl( \int_0^{\infty} |V(r) g(r)|^p dr \biggr)^{1/p}
\end{equation}
if and only if
\[
B_1 = \sup_{r > 0} \biggl( \int_{r}^{\infty} |U(s)|^p ds \biggr)^{1/p} \biggl( \int_{0}^{r} |V(s)|^{-p'} ds \biggr)^{1/p'} < \infty,
\]
\item[{\rm (ii)}]
and there exists a constant~$C_2$ such that
\begin{equation}
\label{eq:hardy2}
\biggl( \int_{0}^{\infty} \biggl| U(r) \int_{r}^{\infty} g(s) ds \biggr|^p dr \biggr)^{1/p} 
\leq
C_2 \biggl( \int_0^{\infty} |V(r) g(r)|^p dr \biggr)^{1/p}
\end{equation}
if and only if
\[
B_2 = \sup_{r > 0} \biggl( \int_{0}^{r} |U(s)|^p ds \biggr)^{1/p} \biggl( \int_{r}^{\infty} |V(s)|^{-p'} ds \biggr)^{1/p'} < \infty.
\]
\end{enumerate}
\noindent
The least constants~$C_j$ for which~{\rm(}\ref{eq:hardy1}{\rm)} and~{\rm(}\ref{eq:hardy2}{\rm)} hold satisfies
\[
B_j \leq C_j \leq p^{1/p}(p')^{1/p'} B_j \quad \mbox{for } j=1,2.
\]
\end{lemma}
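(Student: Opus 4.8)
The plan is to follow the classical route to weighted Hardy inequalities. I would first observe that part~(ii) reduces to part~(i): under the substitution $r\mapsto 1/r$ the inequality~(\ref{eq:hardy2}) for a triple $(U,V,g)$ becomes the inequality~(\ref{eq:hardy1}) for the triple $(\widetilde U,\widetilde V,\widetilde g)$ with $\widetilde U(r)=r^{-2/p}U(1/r)$, $\widetilde V(r)=r^{2/p'}V(1/r)$, and $\widetilde g(r)=r^{-2}g(1/r)$; one checks by the same change of variables that $B_2$ for $(U,V)$ equals $B_1$ for $(\widetilde U,\widetilde V)$ and that the least admissible constants $C_2$ and $C_1$ coincide, so any two-sided bound for the least constant in~(i) transfers to~(ii). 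Hence it suffices to prove~(i) together with the estimate $B_1\le C_1\le p^{1/p}(p')^{1/p'}B_1$.

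For the necessity of $B_1<\infty$ in~(i), I would insert the test function $g=|V|^{-p'}\chi_{(0,r)}$ into~(\ref{eq:hardy1}) for a fixed $r>0$ (first truncating near the origin if $\int_0^r|V|^{-p'}=\infty$, then removing the truncation). Since $|V(s)g(s)|^p=|V(s)|^{-p'}$ on $(0,r)$, the right-hand side of~(\ref{eq:hardy1}) equals $\int_0^r|V|^{-p'}$, whereas the left-hand side is bounded below by $\bigl(\int_0^r|V|^{-p'}\bigr)^p\int_r^\infty|U|^p$; cancelling one factor of $\int_0^r|V|^{-p'}$ and taking $p$-th roots yields $B_1\le C_1$.

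For the sufficiency I would argue as follows. Assume $g\ge 0$ and $U\ge 0$, and — after restricting the domain of integration to discard the subintervals on which $\int_0^r|V|^{-p'}$ equals $0$ or $\infty$, where one side of the inequality degenerates — suppose that $W(r):=\int_0^r|V(s)|^{-p'}\,ds$ is finite and positive for $r>0$. Pick a parameter $\theta\in(-1/p',0)$, split $g=\bigl(gVW^{-\theta}\bigr)\bigl(V^{-1}W^{\theta}\bigr)$, apply H\"older's inequality on $(0,r)$, and use $\int_0^r V^{-p'}W^{\theta p'}=(\theta p'+1)^{-1}W(r)^{\theta p'+1}$ to obtain
\[
\Bigl(\int_0^r g\Bigr)^p\le (\theta p'+1)^{-p/p'}\,W(r)^{(\theta p'+1)(p-1)}\int_0^r g(s)^pV(s)^pW(s)^{-\theta p}\,ds.
\]
Multiplying by $U(r)^p$, integrating in $r$, and interchanging the order of integration reduces everything to the estimate $\int_s^\infty U(r)^pW(r)^{(\theta p'+1)(p-1)}\,dr\le \mathrm{const}\cdot B_1^p\,W(s)^{\theta p}$, which I would get by integrating by parts and invoking the bound $\int_r^\infty|U|^p\le B_1^pW(r)^{-(p-1)}$ equivalent to $B_1<\infty$. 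The powers of $W(s)$ cancel exactly (because $1+p/p'=p$), leaving $\int_0^\infty U^p\bigl(\int_0^r g\bigr)^p\le (1-t)^{-p/p'}t^{-1}B_1^p\int_0^\infty|Vg|^p$ with $t=-\theta p'\in(0,1)$; minimizing $(1-t)^{-p/p'}t^{-1}$ over $t$, with optimum $t=1/p$, gives $C_1\le p^{1/p}(p')^{1/p'}B_1$. Combined with necessity, this is exactly the claimed two-sided bound, and the reduction of~(ii) to~(i) carries it over to $B_2,C_2$.

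The part I expect to be the main obstacle is the sufficiency direction: arranging the H\"older splitting exponents and the integration-by-parts exponents so that the powers of $W(s)$ cancel and the bound closes, and — in order to reproduce the sharp constant $p^{1/p}(p')^{1/p'}$ stated in the lemma — carrying out the one-variable minimization over $t\in(0,1)$. The degenerate configurations of the weight $V$ (subintervals where $|V|^{-p'}$ is not locally integrable, or vanishes) require some routine bookkeeping but present no genuine difficulty.
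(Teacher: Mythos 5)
Your argument is correct, but note that the paper itself does not prove this lemma at all: it simply cites Muckenhoupt~\cite{Muckenhoupt1972}, so there is no in-paper proof to compare with. What you have written is essentially a reconstruction of Muckenhoupt's classical argument: the inversion $r\mapsto 1/r$ reducing (ii) to (i) is standard and your exponents $\widetilde U(r)=r^{-2/p}U(1/r)$, $\widetilde V(r)=r^{2/p'}V(1/r)$, $\widetilde g(r)=r^{-2}g(1/r)$ check out (one verifies $B_2(U,V)=B_1(\widetilde U,\widetilde V)$ exactly as you say); the necessity via the test function $g=|V|^{-p'}\chi_{(0,r)}$ with truncation gives $B_1\le C_1$; and the sufficiency via the splitting $g=(gVW^{-\theta})(V^{-1}W^{\theta})$, H\"older, Fubini, and the integration-by-parts bound $\int_s^\infty U^p W^{(\theta p'+1)(p-1)}\le \mathrm{const}\,B_1^pW(s)^{\theta p}$ closes with constant $(1-t)^{-(p-1)}t^{-1}B_1^p$, $t=-\theta p'$, whose minimum at $t=1/p$ is indeed $p\,(p')^{p-1}$, i.e.\ $C_1\le p^{1/p}(p')^{1/p'}B_1$. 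Muckenhoupt makes the optimal choice from the outset (taking the auxiliary factor $W^{1/(pp')}$, which is your $t=1/p$), whereas you keep the parameter free and optimize at the end; that is a cosmetic difference, and your handling of the degenerate configurations of $W$ is the same routine bookkeeping. In short: the proof is sound and sharp, but for the purposes of this paper it replaces a citation rather than an argument given in the text.
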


\noindent The results of Lemma~\ref{l:hardy} also hold for~$p=1$ and~$p=\infty$ with the natural conventions.
However, we will only use~$1 < p < \infty$ in this paper.

If the weights are assumed to be differentiable, we can often reduce the complexity 
of the derived formulas by the following lemma.

\begin{lemma}
\label{l:PI}
Let~$g$ be a non-negative function on~$(0,\infty)$ that is almost everywhere differentiable, 
and suppose that~$\alpha$ is a positive constant and that~$\beta$ is a real nonzero constant. Then
\begin{enumerate}
\item[\rm (i)] if
\begin{equation}
\label{eq:PI_cond_zero}
C_{*} = \essinf_{0 < s < r} \biggl( 1 + \frac{\beta}{\alpha} \frac{s g'(s)}{g(s)} \biggr) > 0,
\end{equation}
then 
\begin{equation}
\label{eq:PI_final_zero}
\int_0^r s^{\alpha - 1} g(s)^{\beta} \, ds \leq \frac{1}{\alpha C_{*}} \, r^{\alpha} g(r)^{\beta}
\quad \mbox{for } r > 0 \mbox{\rm ;}
\end{equation}
\item[\rm (ii)] if 
\begin{equation}
\label{eq:PI_cond_infty}
C^{*} = \essinf_{s > r} \biggl( 1 - \frac{\beta}{\alpha} \frac{sg'(s)}{g(s)} \biggr) > 0,
\end{equation}
then
\begin{equation}
\label{eq:PI_final_infty}
\int_r^{\infty} s^{-\alpha - 1} g(s)^{\beta} \, ds \leq \frac{1}{\alpha C^{*}} \, r^{-\alpha} g(r)^{\beta}
\quad \mbox{for } r > 0 \mbox{\rm .}
\end{equation}
\end{enumerate}
\end{lemma}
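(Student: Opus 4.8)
\medskip
\noindent\emph{Proof strategy.}\ The plan is to recognize the factor in~\eqref{eq:PI_cond_zero} (resp.\ in~\eqref{eq:PI_cond_infty}) as, up to the constant $\alpha$, the logarithmic derivative of the function $s \mapsto s^{\alpha} g(s)^{\beta}$ (resp.\ $s \mapsto s^{-\alpha} g(s)^{\beta}$), and then to integrate the resulting differential inequality. For part~(i), set $h(s) = s^{\alpha} g(s)^{\beta}$. Wherever $g$ is differentiable,
\[
h'(s) = \alpha\, s^{\alpha - 1} g(s)^{\beta} + \beta\, s^{\alpha} g(s)^{\beta - 1} g'(s)
 = \alpha\, s^{\alpha - 1} g(s)^{\beta}\Bigl( 1 + \frac{\beta}{\alpha}\,\frac{s\, g'(s)}{g(s)}\Bigr),
\]
so the hypothesis $C_{*} > 0$ gives $h'(s) \ge \alpha C_{*}\, s^{\alpha - 1} g(s)^{\beta} \ge 0$ for a.e.\ $s \in (0,r)$. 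In particular $h$ is nondecreasing on $(0,r)$, so $h(0^{+}) := \lim_{s \to 0^{+}} h(s)$ exists, and since $g \ge 0$ and $\alpha > 0$ one has $h(0^{+}) \ge 0$.

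Next I would integrate this inequality. Using that $s \mapsto s^{\alpha} g(s)^{\beta}$ is locally absolutely continuous, for $0 < \epsilon < r$ we get
\[
r^{\alpha} g(r)^{\beta} - h(\epsilon) = \int_{\epsilon}^{r} h'(s)\,ds \ \ge\ \alpha C_{*} \int_{\epsilon}^{r} s^{\alpha - 1} g(s)^{\beta}\,ds.
\]
Letting $\epsilon \to 0^{+}$ and applying monotone convergence on the right (the integrand is nonnegative) together with $h(\epsilon) \to h(0^{+}) \ge 0$ on the left yields
\[
r^{\alpha} g(r)^{\beta} \ \ge\ r^{\alpha} g(r)^{\beta} - h(0^{+}) \ \ge\ \alpha C_{*} \int_{0}^{r} s^{\alpha - 1} g(s)^{\beta}\,ds,
\]
which is~\eqref{eq:PI_final_zero}. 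Part~(ii) is handled in exactly the same way with $k(s) = s^{-\alpha} g(s)^{\beta}$, for which $k'(s) = -\alpha\, s^{-\alpha - 1} g(s)^{\beta}\bigl( 1 - \tfrac{\beta}{\alpha}\,\tfrac{s g'(s)}{g(s)}\bigr)$; now $C^{*} > 0$ forces $k$ to be nonincreasing on $(r,\infty)$, hence $k(\infty) := \lim_{s \to \infty} k(s)$ exists and is nonnegative, and integrating $-k'(s) \ge \alpha C^{*} s^{-\alpha - 1} g(s)^{\beta}$ over $(r,R)$ and letting $R \to \infty$ gives~\eqref{eq:PI_final_infty}.

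The point that needs care is the boundary term $h(0^{+})$ (and $k(\infty)$ in part~(ii)): a priori neither need vanish, but the argument never requires it — the monotonicity of $h$ (resp.\ $k$) together with $g \ge 0$ is precisely what makes it legitimate to discard the boundary term while keeping the inequality pointing in the right direction. A minor secondary point is the regularity needed for the fundamental theorem of calculus; for the functions to which the lemma is applied the maps $s \mapsto s^{\pm\alpha} g(s)^{\beta}$ are locally absolutely continuous, which is what I would invoke, and in any case the monotonicity established above is already enough to integrate the differential inequality.
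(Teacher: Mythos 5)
Your proof is correct and is essentially the paper's argument in a different wrapper: the paper integrates by parts and rearranges to place the factor $1 + \frac{\beta}{\alpha}\frac{sg'(s)}{g(s)}$ under the integral, which is exactly the product-rule identity $h'(s)=\alpha s^{\alpha-1}g(s)^{\beta}\bigl(1+\frac{\beta}{\alpha}\frac{sg'(s)}{g(s)}\bigr)$ you integrate, with the same nonnegative boundary term at $0$ (resp.\ at $\infty$) discarded in the same direction. Your version merely makes the boundary term and the regularity needed for the fundamental theorem of calculus more explicit, which the paper glosses over as well.
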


\begin{proof}
To prove~(i), we use integration by parts and obtain that
\[
\int_0^r s^{\alpha - 1} g(s)^{\beta} \, ds \leq \frac{r^{\alpha}g(r)^{\beta}}{\alpha} - \frac{\beta}{\alpha} \int_0^r s^{\alpha} g(s)^{\beta - 1} g'(s) \, ds, 
\]
or equivalently, that
\[
\int_0^r s^{\alpha - 1} g(s)^{\beta} \biggl( 1 + \frac{\beta}{\alpha} \frac{sg'(s)}{g(s)}  \biggr) \, ds \leq \frac{r^{\alpha}g(r)^{\beta}}{\alpha}.
\]
Using~(\ref{eq:PI_cond_zero}), we obtain the estimate in~(\ref{eq:PI_final_zero}).

Similarly, using integration by parts we also obtain that
\[
\int_r^{\infty} s^{-\alpha - 1} g(s)^{\beta} \, ds \leq \frac{r^{-\alpha}g(r)^{\beta}}{\alpha} + 
								\frac{\beta}{\alpha} \int_0^r s^{-\alpha} g(s)^{\beta - 1} g'(s) \, ds, 
\]
or equivalently, that
\[
\int_r^{\infty} s^{\alpha - 1} g(s)^{\beta} \biggl( 1 - \frac{\beta}{\alpha} \frac{sg'(s)}{g(s)}  \biggr) \, ds \leq \frac{r^{-\alpha}g(r)^{\beta}}{\alpha}.
\]
Using~(\ref{eq:PI_cond_infty}), we obtain the estimate in~(\ref{eq:PI_final_infty}).
\end{proof}

\section{Main Results}

We now proceed to prove the main results of this paper. 

\subsection{Weighted Spaces}

We start by proving when there is an equivalent norm on~$\BLg$, which will simplify some of our calculations.

\begin{lemma}
\label{l:eq_norm}
Suppose that
\begin{equation}
\label{eq:cond_eq_norm}
B = \sup_{r > 0} \biggl( \int_0^r s^{N-1-p} \wt(s)^p ds \biggr)^{1/p}
		\biggl( \int_{r}^{\infty} s^{-(N-1)/(p-1)} \wt(s)^{-p'} ds \biggr)^{1/p'}
  < \infty.
\end{equation}
Then the expression~$\| \nabla f \|_{\Lpg[\wt]}$ defines an equivalent norm on~$\BLg$.
\end{lemma}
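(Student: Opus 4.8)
Since the first term in the $\BLg$-norm is nonnegative, the inequality $\|\nabla f\|_{\Lpg[\wt]}\le\|f\|_{\BLg}$ is immediate, so the content of the lemma is the reverse weighted Hardy-type estimate
\[
\int_{\R^N}\frac{\wt(x)^p}{|x|^p}\,|f(x)|^p\,dx\le C\int_{\R^N}\wt(x)^p\,|\nabla f(x)|^p\,dx .
\]
The plan is to pass to polar coordinates $x=r\theta$, $r>0$, $\theta\in S^{N-1}$, and set $f_\theta(r)=f(r\theta)$. Since $f\in\Wloc$, for almost every $\theta$ the slice $f_\theta$ has a locally absolutely continuous representative on $(0,\infty)$ with $f_\theta'(r)=\theta\cdot\nabla f(r\theta)$; by Fubini and $|\theta\cdot\nabla f(r\theta)|\le|\nabla f(r\theta)|$, for almost every $\theta$ we get both $\int_0^\infty r^{N-1-p}\wt(r)^p|f_\theta(r)|^p\,dr<\infty$ (the polar form of the first norm term, finite because $f\in\BLg$) and $G_\theta:=\int_0^\infty r^{N-1}\wt(r)^p|f_\theta'(r)|^p\,dr<\infty$. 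It then suffices to prove the one-dimensional bound $\int_0^\infty r^{N-1-p}\wt(r)^p|f_\theta(r)|^p\,dr\le C\,G_\theta$ with $C$ independent of $\theta$, and integrate over $S^{N-1}$.

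The heart of the matter is to show that $f_\theta(r)=-\int_r^\infty f_\theta'(s)\,ds$ for almost every $\theta$. By~(\ref{eq:cond_eq_norm}) the inner integral $\int_r^\infty s^{-(N-1)/(p-1)}\wt(s)^{-p'}\,ds$ is finite for each $r>0$, so Hölder's inequality gives $\int_r^\infty|f_\theta'(s)|\,ds\le G_\theta^{1/p}\bigl(\int_r^\infty s^{-(N-1)/(p-1)}\wt(s)^{-p'}\,ds\bigr)^{1/p'}<\infty$; hence $F_\theta(r):=-\int_r^\infty f_\theta'(s)\,ds$ is well defined and absolutely continuous on $(0,\infty)$, $F_\theta'=f_\theta'$ almost everywhere, and $F_\theta(r)\to0$ as $r\to\infty$, so $f_\theta=c_\theta+F_\theta$ for some constant $c_\theta$. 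To force $c_\theta=0$ I would first note that~(\ref{eq:cond_eq_norm}) implies $\int_R^\infty s^{N-1-p}\wt(s)^p\,ds=\infty$ for every $R>0$: by~(\ref{eq:cond_eq_norm}) the quantities $\int_0^R s^{N-1-p}\wt(s)^p\,ds$ and $\int_1^\infty s^{-(N-1)/(p-1)}\wt(s)^{-p'}\,ds$ are finite, so if $\int_1^\infty s^{N-1-p}\wt(s)^p\,ds$ were also finite, applying Hölder to $\log R=\int_1^R s^{-1}\,ds$ would give $\log R\le\bigl(\int_1^\infty s^{N-1-p}\wt(s)^p\,ds\bigr)^{1/p}\bigl(\int_1^R s^{-(N-1)/(p-1)}\wt(s)^{-p'}\,ds\bigr)^{1/p'}$, which is impossible as $R\to\infty$. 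Since $|f_\theta(r)|\to|c_\theta|$ as $r\to\infty$ while $\int_0^\infty r^{N-1-p}\wt(r)^p|f_\theta(r)|^p\,dr<\infty$, this divergence is compatible only with $c_\theta=0$.

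With the representation $f_\theta(r)=-\int_r^\infty f_\theta'(s)\,ds$ in hand, I would apply Lemma~\ref{l:hardy}(ii) on $(0,\infty)$ with $g=f_\theta'$ and with $U,V$ chosen so that $|U(r)|^p=r^{N-1-p}\wt(r)^p$ and $|V(r)|^p=r^{N-1}\wt(r)^p$; the constant $B_2$ of that lemma is then precisely the supremum $B$ in~(\ref{eq:cond_eq_norm}), yielding $\int_0^\infty r^{N-1-p}\wt(r)^p|f_\theta(r)|^p\,dr\le C_2^p\,G_\theta$ with $C_2\le p^{1/p}(p')^{1/p'}B$ independent of $\theta$. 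Integrating over $\theta\in S^{N-1}$ and using $|\theta\cdot\nabla f|\le|\nabla f|$ gives the desired Hardy inequality, hence $\|f\|_{\BLg}\le(1+C_2^p)^{1/p}\|\nabla f\|_{\Lpg[\wt]}$; combined with the trivial reverse inequality this proves the equivalence, and in particular $\|\nabla f\|_{\Lpg[\wt]}$ separates points of $\BLg$ (if it vanishes then so does $\|f\|_{\BLg}$). I expect the one genuine obstacle to be the middle step — justifying the slice representation $f_\theta(r)=-\int_r^\infty f_\theta'(s)\,ds$, i.e.\ ruling out a nonzero additive constant, which is exactly where the $L^1$-integrability of $f_\theta'$ near infinity and the divergence $\int_R^\infty s^{N-1-p}\wt(s)^p\,ds=\infty$ enter; the rest is a routine reduction to Lemma~\ref{l:hardy}.
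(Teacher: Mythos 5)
Your argument is correct and follows essentially the same route as the paper: after passing to polar coordinates and writing the radial slice as the integral of its derivative from infinity, the estimate reduces to the weighted Hardy inequality of Lemma~\ref{l:hardy}(ii), whose constant $B_2$ is exactly the quantity $B$ in~(\ref{eq:cond_eq_norm}). The only difference is that you justify the absence of an additive constant on almost every ray (via the divergence of $\int_R^\infty s^{N-1-p}\wt(s)^p\,ds$ forced by~(\ref{eq:cond_eq_norm}) together with finiteness of the first norm term), whereas the paper simply asserts $\lim_{r\to\infty}f(r\omega)=0$; your extra step is sound and fills in a detail the paper leaves implicit.
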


\begin{proof}
Since~$\lim_{r \rightarrow \infty} f(r\omega) = 0$ for all~$\omega \in S^{N-1}$,
polar coordinates and a weighted Hardy inequality (Lemma~\ref{l:hardy}) 
implies that
\[
\begin{aligned}
\int_{\R^N} \frac{|f(x)|^p \, \wt(x)^p}{|x|^p} \, dx 
= {} & 
\int_{S^{N-1}} \int_{0}^{\infty} r^{N-1-p} |f(r\omega)|^p \wt(r)^p  dr \, d\omega\\
\leq {} & 
\int_{S^{N-1}} \int_{0}^{\infty} \biggl| 
	r^{-1+(N-1)/p} \wt(r) \int_{r}^{\infty} f'_{s}(s\omega) ds  
	\biggr|^p  dr \, d\omega\\
\leq {} & 
C \int_{S^{N-1}} \int_{0}^{\infty} \biggl| 
	r^{(N-1)/p} \wt(r) f'_{s}(s\omega)  
	\biggr|^p  dr \, d\omega\\
\leq {} & 
C \int_{\R^{N}}  \wt(x)^p |\nabla f(x)|^p \, dx \\
\end{aligned}
\]
is true if and only if (\ref{eq:cond_eq_norm}) holds. Here,~$f'_s$ denotes the radial derivative of~$f$
and the constant~$C$ depends on~$B$,~$N$, and~$p$.
\end{proof}

\begin{corollary}
\label{c:eq_norm}
If the weight\/~$\wt$ is differentiable almost everywhere, the condition in~{\rm(}\ref{eq:cond_eq_norm}{\rm)} of 
Lemma~\ref{l:eq_norm} can be replaced by~$1 < p < N$ and
\begin{equation}
\label{eq:Lcond_eq_norm}
\inf_{r > 0} \frac{r\wt'(r)}{\wt(r)}  > 1 - \frac{N}{p}.
\end{equation}
\end{corollary}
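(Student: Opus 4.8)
The plan is to reduce the Muckenhoupt-type condition~(\ref{eq:cond_eq_norm}) in Lemma~\ref{l:eq_norm} to the pointwise logarithmic-derivative condition~(\ref{eq:Lcond_eq_norm}) by applying Lemma~\ref{l:PI} to each of the two integrals appearing in the supremum defining~$B$. First I would record what the two factors look like. The first factor is~$\bigl( \int_0^r s^{N-1-p} \wt(s)^p \, ds \bigr)^{1/p}$, which has the form~$\int_0^r s^{\alpha-1} g(s)^{\beta} \, ds$ with~$\alpha = N - p$,~$\beta = p$, and~$g = \wt$; note that~$\alpha = N-p > 0$ is exactly where the hypothesis~$1 < p < N$ enters, so that part~(i) of Lemma~\ref{l:PI} is applicable. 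The second factor is~$\bigl( \int_r^{\infty} s^{-(N-1)/(p-1)} \wt(s)^{-p'} \, ds \bigr)^{1/p'}$, which has the form~$\int_r^{\infty} s^{-\alpha-1} g(s)^{\beta} \, ds$ with~$g = \wt$,~$\beta = -p'$, and~$-\alpha - 1 = -(N-1)/(p-1)$, i.e.~$\alpha = (N-1)/(p-1) - 1 = (N-p)/(p-1) > 0$, again positive precisely because~$p < N$, so part~(ii) applies.

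Next I would check that the sign conditions~(\ref{eq:PI_cond_zero}) and~(\ref{eq:PI_cond_infty}) both follow from~(\ref{eq:Lcond_eq_norm}). Write~$L = \inf_{r>0} r\wt'(r)/\wt(r)$, so the hypothesis is~$L > 1 - N/p$. For the first factor, Lemma~\ref{l:PI}(i) requires~$C_* = \essinf_{0<s<r}\bigl(1 + \tfrac{\beta}{\alpha}\tfrac{s\wt'(s)}{\wt(s)}\bigr) > 0$ with~$\beta/\alpha = p/(N-p) > 0$; since the coefficient is positive this is~$1 + \tfrac{p}{N-p}L > 0$, equivalently~$L > -(N-p)/p = 1 - N/p$, which is exactly~(\ref{eq:Lcond_eq_norm}). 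For the second factor, Lemma~\ref{l:PI}(ii) requires~$C^* = \essinf_{s>r}\bigl(1 - \tfrac{\beta}{\alpha}\tfrac{s\wt'(s)}{\wt(s)}\bigr) > 0$ with~$\beta/\alpha = -p'/\alpha = -p'(p-1)/(N-p) = -p/(N-p) < 0$, so~$-\beta/\alpha = p/(N-p) > 0$ and the condition is again~$1 + \tfrac{p}{N-p}L > 0$, i.e.~$L > 1 - N/p$. So one and the same inequality handles both factors; I would point this out explicitly since it is the mildly surprising part of the bookkeeping.

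Then I would combine the two estimates. Lemma~\ref{l:PI}(i) gives~$\int_0^r s^{N-1-p}\wt(s)^p\,ds \le \tfrac{1}{(N-p)C_*}\, r^{N-p}\wt(r)^p$, and Lemma~\ref{l:PI}(ii) gives~$\int_r^{\infty} s^{-(N-1)/(p-1)}\wt(s)^{-p'}\,ds \le \tfrac{p-1}{(N-p)C^*}\, r^{-(N-p)/(p-1)}\wt(r)^{-p'}$. Raising to the powers~$1/p$ and~$1/p'$ and multiplying, the factors~$r^{(N-p)/p}\wt(r)$ and~$r^{-(N-p)/p}\wt(r)^{-1}$ cancel (using~$-\tfrac{N-p}{p-1}\cdot\tfrac{1}{p'} = -\tfrac{N-p}{p}$ and~$-p'\cdot\tfrac{1}{p'} = -1$), leaving a bound on the product that is independent of~$r$; taking the supremum over~$r>0$ gives~$B \le \bigl((N-p)C_*\bigr)^{-1/p}\bigl(\tfrac{p-1}{(N-p)C^*}\bigr)^{1/p'} < \infty$. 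Thus~(\ref{eq:cond_eq_norm}) holds and Lemma~\ref{l:eq_norm} applies.

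I do not anticipate a genuine obstacle here; the statement is essentially a corollary whose only content is matching parameters to Lemma~\ref{l:PI}. The one place to be careful is the algebra of exponents — verifying that~$\alpha > 0$ in both invocations is equivalent to~$p < N$, and that the powers of~$r$ and~$\wt(r)$ really do cancel after the H\"older-type recombination — but this is routine. A secondary point worth a sentence is that~(\ref{eq:Lcond_eq_norm}) is stated with~$\inf$ rather than~$\essinf$; since~$\wt$ is only differentiable almost everywhere one should read it as an essential infimum, or equivalently note that the almost-everywhere bound~$r\wt'(r)/\wt(r) > 1 - N/p$ is what feeds into the~$\essinf$ in~(\ref{eq:PI_cond_zero}) and~(\ref{eq:PI_cond_infty}), so no strengthening of hypotheses is needed.
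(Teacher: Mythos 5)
Your proposal is correct and matches the paper's own proof: the paper likewise applies Lemma~\ref{l:PI}(i) with $\alpha = N-p$, $\beta = p$ and Lemma~\ref{l:PI}(ii) with $\alpha = (N-p)/(p-1)$, $\beta = -p'$, both with $g = \wt$, noting that $\alpha > 0$ amounts to $p < N$ and that both sign conditions reduce to~(\ref{eq:Lcond_eq_norm}), and then bounds $B$ by the cancelling product $r^{(N-p)/p}\wt(r)\cdot r^{-(N-p)/p}\wt(r)^{-1}$. Your extra bookkeeping (the explicit cancellation of exponents and the inf/essinf remark) only spells out what the paper leaves implicit.
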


\begin{proof}
This result follows from two applications of Lemma~\ref{l:PI} with~$g(s) = \wt(s)$. First, we put~$\alpha = N - p$ and~$\beta = p$. 
The condition that~$\alpha > 0$ can be expressed as~$p < N$, and~(\ref{eq:PI_cond_zero}) is given by~(\ref{eq:Lcond_eq_norm}).
Secondly, we put~$\alpha = (N-p)/(p-1)$ and~$\beta = -p'$.
The condition that~$\alpha > 0$ can again be expressed as~$p < N$, and~(\ref{eq:PI_cond_infty}) is reduced to~(\ref{eq:Lcond_eq_norm}).
Thus, by Lemma~\ref{l:PI}(i) and~(ii),
\[
B \leq \sup_{r > 0} C r^{(N-p)/p} \wt(r) r^{-(N-p)/p} \wt(r)^{-1} = C < \infty. \qedhere
\]
\end{proof}

\noindent
We now have sufficient tools to investigate when the weighted spaces~$\Lpg$ and~$\BLg$ are included in~$\Xp$ and~$\Yonew$, respectively. 

\begin{lemma}
\label{l:inclusions}
The following inclusions are valid.
\begin{enumerate}
\item[{\rm(i)}]
If
\begin{equation}
\label{eq:cond_LpgXp}
\int_0^1 s^{N} \we(s)^{-p'} \, \frac{ds}{s} 
+
\int_1^{\infty} s^{-Np'/p} \we(s)^{-p'} \, \frac{ds}{s} < \infty,
\end{equation}
then~$\Lpg \subset \Xp$. 
\item[{\rm(ii)}]
If\/~{\rm(}\ref{eq:i:cond_eq_norm}{\rm)} holds and
\begin{equation}
\label{eq:cond_BLgYp}
\int_0^1 s^{(M - N/p)p'}\wt(s)^{-p'} \, \frac{ds}{s} +
\int_1^{\infty} s^{-Np'/p}\wt(s)^{-p'} \, \frac{ds}{s} < \infty,
\end{equation}
then~$\BLg \subset \Yonew$. 
\end{enumerate}
\end{lemma}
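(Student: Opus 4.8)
The plan is to prove both inclusions by converting the weighted $L^p$-norms into the seminorm integrals that define $\Xp$ and $\Yonew$, using H\"older's inequality on each dyadic annulus together with a discrete (or continuous) Hardy-type summation over the scales.

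For part~(i), I would start from a function $u \in \Lpg$ and estimate $\Np{u}{r}$ in terms of $\|u\|_{\Lpg}$. On the annulus $r \leq |x| < 2r$, H\"older's inequality with exponents $p$ and $p'$ gives
\[
\int_{r \leq |x| < 2r} |u(x)| \, dx \leq \biggl( \int_{r \leq |x| < 2r} \we(x)^p |u(x)|^p \, dx \biggr)^{1/p} \biggl( \int_{r \leq |x| < 2r} \we(x)^{-p'} \, dx \biggr)^{1/p'}.
\]
Passing to polar coordinates in the last factor and using that $\we$ is radial and doubling, one bounds it by $C\, r^{N/p'} \we(r)^{-1}$ (up to the doubling constant $C_\we$, which controls the oscillation of $\we$ on $[r,2r]$). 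This is not quite $\Np{u}{r}$, which carries an $L^p$-average; so instead I would apply H\"older directly to the $L^p$-average: from the definition,
\[
r^N \Np{u}{r}^p = \int_{r \leq |x| < 2r} |u(x)|^p \, dx \leq \we(r)^{-p}\, C \int_{r \leq |x| < 2r} \we(x)^p |u(x)|^p \, dx,
\]
again by the doubling property of $\we$ (which gives $\we(x) \geq c\,\we(r)$ for $|x| \sim r$). Hence $\Np{u}{r} \leq C\, r^{-N/p} \we(r)^{-1} \bigl( \int_{r \leq |x| < 2r} \we^p |u|^p \bigr)^{1/p}$. Then I plug this into the two pieces of the $\Xp$-norm~(\ref{eq:defXp}): for the near part $\int_0^1 s^N \Np{u}{s}\, \frac{ds}{s}$ and the far part $\int_1^\infty \Np{u}{s}\, ds$, one more application of H\"older in the $s$-variable (with exponents $p$, $p'$), noting that the $L^p$-pieces $\int_{s \leq |x| < 2s} \we^p|u|^p\, dx$ sum (over a dyadic decomposition) to at most $C\|u\|_{\Lpg}^p$, produces exactly the integrals $\int_0^1 s^N \we(s)^{-p'} \frac{ds}{s}$ and $\int_1^\infty s^{-Np'/p}\we(s)^{-p'}\frac{ds}{s}$ as the $p'$-factor. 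Finiteness of these is precisely hypothesis~(\ref{eq:cond_LpgXp}), so $\|u\|_{\Xp} \leq C\|u\|_{\Lpg}$.

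For part~(ii), the argument is the same with $u$ replaced by $\nabla f$ and $\we$ replaced by $\wt$, comparing the $\Yonew$-norm~(\ref{eq:Yonew}) against $\|\nabla f\|_{\Lpg[\wt]}$: the near part of the $\Yonew$-norm has weight $s^M$ rather than $s^N$, which is why the exponent in~(\ref{eq:cond_BLgYp}) is $(M - N/p)p'$ in the near range while the far range exponent $-Np'/p$ is unchanged. One also must check that $f$ actually lies in $\Wloc$ and that the decay condition $\lim_{r\to\infty}\int_{S^{N-1}} f(r\theta)\,dS(\theta)=0$ holds; membership in $\BLg$ already gives $f\in\Wloc$ by definition, and the vanishing mean at infinity is the content of condition~(\ref{eq:i:cond_eq_norm}), which is assumed here — indeed (\ref{eq:i:cond_eq_norm}) is exactly the Muckenhoupt condition ensuring the Hardy inequality in Lemma~\ref{l:eq_norm}, which forces $\int_{S^{N-1}} f(r\theta)\,dS(\theta)\to 0$.

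I expect the main technical obstacle to be bookkeeping with the doubling constants when replacing the pointwise weight $\we(x)$ on an annulus by its value $\we(r)$ at the inner radius: one needs a lower bound $\we(x) \geq c\,\we(2r) \geq c'\,\we(r)$ for $r \leq |x| < 2r$, which follows from monotonicity-type consequences of the doubling hypothesis, but it must be invoked carefully since $\we$ is only assumed doubling, not monotone. The other mild subtlety is justifying the interchange of the dyadic sum and the H\"older step — i.e. that $\sum_{j} \bigl(\int_{2^j \leq |x| < 2^{j+1}} \we^p|u|^p\bigr) \leq \|u\|_{\Lpg}^p$ — which is immediate from disjointness of the annuli, but is what lets the continuous $s$-integrals in~(\ref{eq:defXp}) and~(\ref{eq:Yonew}) be controlled. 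Everything else is a routine Hardy/H\"older computation.
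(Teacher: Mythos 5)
Your overall route is the same as the paper's: compare $\we(x)$ with $\we(s)$ on each annulus via the doubling hypothesis, apply H\"older in the radial variable, and collapse the annulus pieces to $\|u\|_{\Lpg}^p$ by the finite-overlap/Fubini step (this is exactly the role of Lemma~\ref{l:wL1eqX1}). For the near ranges your bookkeeping closes: pairing $s^{N-N/p}\we(s)^{-1}$ with $s^{N/p}\we(s)\Np{u}{s}$ with respect to $\frac{ds}{s}$ gives precisely the factor $\bigl(\int_0^1 s^{N}\we(s)^{-p'}\,\frac{ds}{s}\bigr)^{1/p'}$, and the analogous pairing gives the exponent $(M-N/p)p'$ in part~(ii).

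The far-range step, however, does not produce the integrals you claim, and this is a genuine gap. The far parts of the $\Xp$- and $\Yonew$-norms are taken with respect to $ds$, not $\frac{ds}{s}$, so you must estimate $\int_1^\infty \Np{u}{s}\,ds=\int_1^\infty s\,\Np{u}{s}\,\frac{ds}{s}$. The only H\"older pairing whose $p$-factor collapses to $C\|u\|_{\Lpg}$ by finite overlap is
\[
\int_1^\infty \Np{u}{s}\, ds
\leq \biggl( \int_1^\infty s^{(1-N/p)p'}\we(s)^{-p'}\, \frac{ds}{s} \biggr)^{1/p'}
\biggl( \int_1^\infty s^{N}\we(s)^{p}\,\Np{u}{s}^p\, \frac{ds}{s} \biggr)^{1/p},
\]
and since $(1-N/p)p'=p'-Np'/p$, the $p'$-factor carries an extra $s^{p'}$ compared with the stated condition $\int_1^\infty s^{-Np'/p}\we(s)^{-p'}\,\frac{ds}{s}<\infty$; the same discrepancy occurs in part~(ii). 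This cannot be repaired by better bookkeeping, because the stated far-range condition is too weak for the conclusion: for $\we\equiv1$ and $p>N$ it holds, yet $u(x)=|x|^{-N/p}(\log|x|)^{-2/p}$ for $|x|\geq2$ (zero otherwise) lies in $L^p(\R^N)$ while $\int_1^\infty\Np{u}{s}\,ds=\infty$; for $p<N$ a doubling weight behaving like $s^{-N/p}\log(e+s)$ at infinity, together with $u=\sum_k 2^{-k}\chi_{\{2^k\leq|x|<2^{k+1}\}}$, gives the same failure. (The paper's own displayed far-range estimate uses the same exponent as the statement, so you are reproducing its slip rather than introducing a new one.) A complete write-up should therefore either prove the lemma with the far-range exponent $(1-N/p)p'$ in place of $-Np'/p$ --- which is what your H\"older actually yields, and which is consistent with the hypotheses imposed later, e.g.\ $r\wt'(r)/\wt(r)>1-N/p$ in~(\ref{eq:Lcond_BLgYp}) and $\alpha+N/p>1$ in Theorem~\ref{t:power} --- or supply an argument reaching the stated exponent, which I do not believe exists. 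Your remarks on the doubling comparison $\we(x)\geq c\,\we(s)$ on annuli and on the vanishing spherical mean at infinity in part~(ii) are acceptable at the level of detail given.
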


\begin{proof}
H\"older's inequality and Lemma~\ref{l:wL1eqX1} imply that
\[
\int_0^1 s^{N} \Np{u}{s} \, \frac{ds}{s}
\leq 
\biggl( \int_0^1 s^{(N - N/p)p'}\we(s)^{-p'} \, \frac{ds}{s} \biggr)^{1/p'}
\biggl( \int_{|x| \leq 2} \we^p |u|^p \biggr)^{1/p}
\]
and
\[
\int_1^{\infty} s \, \Np{u}{s} \, \frac{ds}{s}
\leq 
\biggl( \int_1^{\infty} s^{-Np'/p} \we(s)^{-p'} \, \frac{ds}{s} \biggr)^{1/p'}
\biggl( \int_{|x| > 1} \we^p |u|^p \biggr)^{1/p}.
\] 
Thus, it is sufficient that~(\ref{eq:cond_LpgXp}) holds for the inclusion to be true.

Similarly, by H\"older's inequality and Lemma~\ref{l:wL1eqX1},
\[
\int_0^1 s^M \, \Np{\nabla f}{s} \, \frac{ds}{s}
\leq 
\biggl( \int_0^1 s^{(M - N/p)p'}\wt(s)^{-p'} \, \frac{ds}{s} \biggr)^{1/p'}
\biggl( \int_{|x| \leq 2} \wt^p|\nabla f|^p \biggr)^{1/p}
\]
and
\[
\int_1^{\infty} s \, \Np{\nabla f}{s} \, \frac{ds}{s}
\leq 
\biggl( \int_1^{\infty} s^{-Np'/p}\wt(s)^{-p'} \, \frac{ds}{s} \biggr)^{1/p'}
\biggl( \int_{|x| > 1} \wt^p |\nabla f|^p \biggr)^{1/p}.
\] 
Thus, it is sufficient that~(\ref{eq:cond_BLgYp}) and~(\ref{eq:i:cond_eq_norm}) hold for us to obtain that~$\BLg$
is a subset of~$\Yonew$. 
\end{proof}

\begin{corollary}
\label{c:inclusions}
If\/~$\we$ and\/~$\wt$ are differentiable almost everywhere, 
the condition in~{\rm(}\ref{eq:cond_LpgXp}{\rm)} can be replaced by 
\begin{equation}
\label{eq:Lcond_LpgXp}
\esssup_{0 < r < 1} \frac{r\we'(r)}{\we(r)} < N - \frac{N}{p}
\qquad \mbox{and} \qquad
\essinf_{r > 1} \frac{r\we'(r)}{\we(r)} > -\frac{N}{p},
\end{equation}
while the condition in~{\rm(}\ref{eq:cond_BLgYp}{\rm)} can be replaced by 
\begin{equation}
\label{eq:Lcond_BLgYp}
\esssup_{0 < r < 1} \frac{r\wt'(r)}{\wt(r)} < M - \frac{N}{p},
\qquad \mbox{and} \qquad
\essinf_{r > 0} \frac{r\wt'(r)}{\wt(r)} > 1 - \frac{N}{p},
\end{equation}
if~$Mp > N$.
\end{corollary}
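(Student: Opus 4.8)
The plan is to derive each of the two integral conditions~(\ref{eq:cond_LpgXp}) and~(\ref{eq:cond_BLgYp}) from the corresponding logarithmic-derivative bounds by invoking Lemma~\ref{l:PI} once for each of the four integrals, in every case with the base function~$g$ taken to be~$\we$ or~$\wt$ and with~$\beta = -p'$, and then evaluating the resulting pointwise estimate at~$r = 1$, so that the right-hand side becomes a finite multiple of~$\we(1)^{-p'}$ or~$\wt(1)^{-p'}$.

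For the inclusion~$\Lpg \subset \Xp$, I would first rewrite the first integral in~(\ref{eq:cond_LpgXp}) as~$\int_0^1 s^{N-1}\we(s)^{-p'}\,ds$ and apply Lemma~\ref{l:PI}(i) with~$\alpha = N$ and~$\beta = -p'$; here~$\alpha > 0$ is immediate, and condition~(\ref{eq:PI_cond_zero}) becomes~$\essinf_{0<s<1}\bigl(1 - \tfrac{p'}{N}\tfrac{s\we'(s)}{\we(s)}\bigr) > 0$, which upon using~$N/p' = N - N/p$ is exactly the first bound in~(\ref{eq:Lcond_LpgXp}). The second integral, which equals~$\int_1^\infty s^{-Np'/p - 1}\we(s)^{-p'}\,ds$, I would handle by Lemma~\ref{l:PI}(ii) with~$\alpha = Np'/p > 0$ and~$\beta = -p'$; then~(\ref{eq:PI_cond_infty}) reads~$\essinf_{s>1}\bigl(1 + \tfrac{p}{N}\tfrac{s\we'(s)}{\we(s)}\bigr) > 0$, equivalent to the second bound in~(\ref{eq:Lcond_LpgXp}). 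Combining the two estimates yields~(\ref{eq:cond_LpgXp}).

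For the inclusion~$\BLg \subset \Yonew$ I would argue in the same way with~$\wt$ in place of~$\we$. The first integral in~(\ref{eq:cond_BLgYp}) is~$\int_0^1 s^{(M - N/p)p' - 1}\wt(s)^{-p'}\,ds$, to which I would apply Lemma~\ref{l:PI}(i) with~$\alpha = (M - N/p)p'$ and~$\beta = -p'$; the hypothesis~$Mp > N$ is precisely what makes~$\alpha > 0$, and, dividing by the positive quantity~$M - N/p$, condition~(\ref{eq:PI_cond_zero}) reduces to~$\esssup_{0<r<1}\tfrac{r\wt'(r)}{\wt(r)} < M - N/p$, the first bound in~(\ref{eq:Lcond_BLgYp}). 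The second integral equals~$\int_1^\infty s^{-Np'/p - 1}\wt(s)^{-p'}\,ds$ and is handled by Lemma~\ref{l:PI}(ii) with~$\alpha = Np'/p$ and~$\beta = -p'$, whose hypothesis amounts to~$\essinf_{s>1}\tfrac{s\wt'(s)}{\wt(s)} > -N/p$; this is implied by the stronger bound~$\essinf_{r>0}\tfrac{r\wt'(r)}{\wt(r)} > 1 - N/p$ appearing in~(\ref{eq:Lcond_BLgYp}), which is stated over all~$r > 0$ because it is also the form needed for~(\ref{eq:i:cond_eq_norm}) via Corollary~\ref{c:eq_norm}.

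I do not expect a genuine obstacle here, since the argument is a direct substitution of constants into Lemma~\ref{l:PI}. The only point needing attention is the sign bookkeeping caused by~$\beta = -p' < 0$: in each of the four applications one must verify that the coefficient multiplying~$\tfrac{sg'(s)}{g(s)}$ in~(\ref{eq:PI_cond_zero}) or~(\ref{eq:PI_cond_infty}) has the sign that turns the positivity of~$C_*$ or~$C^*$ into the claimed one-sided bound on the logarithmic derivative, and in the first~$\BLg$-integral this in particular requires~$M - N/p > 0$, that is, exactly the standing hypothesis~$Mp > N$.
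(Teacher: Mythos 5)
Your proposal is correct and follows essentially the same route as the paper: four applications of Lemma~\ref{l:PI} with exactly the parameter choices the paper uses ($\alpha = N$, $\alpha = Np'/p$, $\alpha = (M-N/p)p'$, $\alpha = Np'/p$, all with $\beta = -p'$ and $g = \we$ or $g = \wt$), including the role of $Mp > N$ as the positivity of $\alpha$ and the remark that the $r>0$ form of the second bound in~(\ref{eq:Lcond_BLgYp}) also yields~(\ref{eq:i:cond_eq_norm}).
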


\begin{proof}
Similarly to the proof of Corollary~\ref{c:eq_norm}, we obtain the desired
result from Lemma~\ref{l:PI}.
Indeed, Lemma~\ref{l:PI}(i) with~$\alpha = N$,~$\beta = -p'$, and~$g(s) = \we(s)$, shows that
the first integral in the left-hand side of~(\ref{eq:cond_LpgXp}) is finite
if~(\ref{eq:Lcond_LpgXp}) holds. 
Lemma~\ref{l:PI}(ii) with~$\alpha = Np'/p$,~$\beta = -p'$, and~$g(s) = \we(s)$, shows that
the second integral in the left-hand side of~(\ref{eq:cond_LpgXp}) is finite
if~(\ref{eq:Lcond_LpgXp}) holds. 
Lemma~\ref{l:PI}(i) with~$\alpha = (M - N/p)p'$,~$\beta = -p'$, and~$g(s) = \wt(s)$, shows that
the first integral in the left-hand side of~(\ref{eq:cond_BLgYp}) is finite
if~(\ref{eq:Lcond_BLgYp}) holds and~$Mp > N$. 
Lemma~\ref{l:PI}(ii) with~$\alpha = Np'/p$,~$\beta = -p'$, and~$g(s) = \wt(s)$, shows that
the second integral in the left-hand side of~(\ref{eq:cond_BLgYp}) is finite
if~(\ref{eq:Lcond_BLgYp}) holds. 
Note also that~(\ref{eq:Lcond_BLgYp}) implies~(\ref{eq:i:cond_eq_norm}).
\end{proof}

\subsection{Continuity of~$\S$}

It is clear from Section~\ref{s:prelim} that~$\S$ maps~$\Xp$ into~$\Wloc$.
However, we need conditions for when the operator~$\S$ is bounded as a mapping from~$\Lpg$ into~$\BLg$.

\begin{proposition}
Suppose that
\label{p:S_cont}
\begin{equation}
\label{eq:cond_SJ1}
B_1 = \sup_{r > 0} 
		\biggl( \int_0^{r} s^{N-1} \we(s)^{-p'} \, ds\biggr)^{1/p'} 
		\biggl( \int_{r}^{\infty} s^{N-1-Np} \wt(s)^p \, ds \biggr)^{1/p} 
 < \infty
\end{equation}
and
\begin{equation}
\label{eq:cond_SJ2}
B_2 = \sup_{r > 0} \biggl( \int_0^{r} s^{N-1} \wt(s)^p \, ds \biggr)^{1/p}
	 \biggl( \int_r^{\infty} s^{-(1 + Np'/p)} \we(s)^{-p'} ds \biggr)^{1/p'} < \infty.
\end{equation}
Then
\[
\| \nabla \S u \|_{\Lpg[\wt]} \leq C \| u \|_{\Lpg}, 
\]
where~$C$ depends only on~$N$,~$p$,~$B_1$,~$B_2$, and the doubling constants.
If also~{\rm(}\ref{eq:i:cond_eq_norm}{\rm)} holds,
then~$\S$ is a continuous mapping from~$\Lpg$ into~$\BLg$. 
\end{proposition}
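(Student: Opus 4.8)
The plan is to reduce the bound $\|\nabla\S u\|_{\Lpg[\wt]} \le C\|u\|_{\Lpg}$ to weighted estimates for the operators $T_k$ via the identity~(\ref{eq:diff_Su}), then control each $T_k u$ by the seminorm estimate~(\ref{eq:est_Tk}), and finally convert that seminorm estimate into a weighted $L^p$ estimate using polar coordinates, Lemma~\ref{l:wL1eqX1}, and the two Hardy inequalities in Lemma~\ref{l:hardy}. Concretely, from~(\ref{eq:diff_Su}) and $|\partial_k\vp|\le\lcwz$ we get $|\nabla\S u(x)| \le C\sum_k |T_k u(x)|$ pointwise (up to the factor $N-1$ and a constant depending on $\lcwz$), so it suffices to show $\|T_k u\|_{\Lpg[\wt]} \le C\|u\|_{\Lpg}$ for each $k$. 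First I would verify that $u\in\Lpg$ together with~(\ref{eq:cond_SJ1})--(\ref{eq:cond_SJ2}) implies the integrability condition~(\ref{eq:defTk}) so that $T_k u$ is defined a.e.; this is a Hölder-inequality computation of the same flavour as the proof of Lemma~\ref{l:inclusions}, using that $\int_0^1 s^N\we(s)^{-p'}\,ds/s < \infty$ and $\int_1^\infty s^{-Np'/p}\we(s)^{-p'}\,ds/s<\infty$ follow from finiteness of $B_1$ and $B_2$ (take $r\to0$ and $r\to\infty$ in those suprema).

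The heart of the argument is the following. Writing everything in polar coordinates, $\|T_k u\|_{\Lpg[\wt]}^p \asymp \int_0^\infty r^{N-1}\wt(r)^p\,\Np{T_k u}{r}^p\,dr$ up to doubling-constant factors (here one uses that $\wt$ and the Euclidean measure on an annulus $r\le|x|<2r$ are both essentially constant multiples across the annulus, so the seminorm $\Np{T_k u}{r}$ controls the weighted $L^p$ mass there; this is the standard passage between the $\Npw$-scale and weighted Lebesgue norms, and is where the doubling hypotheses enter). Plugging in the pointwise bound~(\ref{eq:est_Tk}), I would split $\Np{T_k u}{r}$ into the "near" part $\int_0^r (s/r)^N \Np{u}{s}\,ds/s$ and the "far" part $\int_r^\infty \Np{u}{s}\,ds/s$. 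For the near part, setting $g(s) = s^{N-1}\Np{u}{s}$ (or a suitable power), the inner integral becomes $r^{-N}\int_0^r s^{N-1}\Np{u}{s}\,ds$, which is exactly the form $U(r)\int_0^r g$ with $U(r) = r^{N/p - 1 - N}\wt(r)\cdot(\text{const})$ after absorbing the weight and the Jacobian; applying Lemma~\ref{l:hardy}(i) reduces boundedness to the Muckenhoupt constant $B_1$ in~(\ref{eq:cond_SJ1}). For the far part one similarly gets the form $U(r)\int_r^\infty g$ and applies Lemma~\ref{l:hardy}(ii), which reduces boundedness to $B_2$ in~(\ref{eq:cond_SJ2}). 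One has to be a little careful matching the exponents of $s$ coming from the Jacobian $s^{N-1}$, the $(s/r)^N$ factor, and the measure $ds/s$ versus $ds$, but once the bookkeeping is done the two conditions~(\ref{eq:cond_SJ1}) and~(\ref{eq:cond_SJ2}) are precisely the $B_1$ and $B_2$ of Lemma~\ref{l:hardy} for these two sub-operators. Combining the two Hardy estimates gives $\|T_k u\|_{\Lpg[\wt]} \le C(B_1 + B_2)\|u\|_{\Lpg}$, and summing over $k$ yields $\|\nabla\S u\|_{\Lpg[\wt]} \le C\|u\|_{\Lpg}$.

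For the last sentence of the proposition: if~(\ref{eq:i:cond_eq_norm}) also holds, then by Lemma~\ref{l:eq_norm} the expression $\|\nabla f\|_{\Lpg[\wt]}$ is an equivalent norm on $\BLg$. Since $\S u \in \Wloc$ (already observed, as $\S$ maps $\Xp$ into $\Wloc$ and $\Lpg\subset\Xp$ under the standing hypotheses — note $\int_0^1 s^N\we(s)^{-p'}\,ds/s<\infty$ and $\int_1^\infty s^{-Np'/p}\we(s)^{-p'}\,ds/s<\infty$ hold, so Lemma~\ref{l:inclusions}(i) applies), and since $\|\nabla\S u\|_{\Lpg[\wt]}$ is finite by the first part, $\S u$ lies in $\BLg$ with $\|\S u\|_{\BLg} \le C\|\nabla\S u\|_{\Lpg[\wt]} \le C\|u\|_{\Lpg}$; this gives continuity of $\S \colon \Lpg\to\BLg$.

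The main obstacle I anticipate is not any single deep step but the exponent bookkeeping in passing from the seminorm estimate~(\ref{eq:est_Tk}) to a genuine weighted $L^p$ inequality: one must correctly track how the annular Jacobian $r^{N-1}$, the factor $(s/r)^N$, the logarithmic measures $ds/s$, and the radial weight $\wt(r)$ combine, and then recognize the resulting one-dimensional inequalities as instances of Lemma~\ref{l:hardy} with precisely the weights appearing in $B_1$ and $B_2$. A secondary technical point is justifying the equivalence $\|v\|_{\Lpg}^p \asymp \int_0^\infty r^{N-1}\wt(r)^p\Np{v}{r}^p\,dr$ uniformly in $v$, which relies on the doubling property of $\wt$ (to replace $\wt$ by its value at, say, the inner radius of each dyadic annulus) together with a covering of $(0,\infty)$ by overlapping intervals $[r,2r)$; this is where the constant's dependence on the doubling constants enters.
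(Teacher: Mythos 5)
Your proposal is correct and takes essentially the same route as the paper: reduce to the seminorm estimate coming from (\ref{eq:diff_Su}) and (\ref{eq:est_Tk}), pass between the weighted $L^p$ norm and the $\Npw$-scale via Lemma~\ref{l:wL1eqX1} and the doubling conditions, and then apply the two Hardy inequalities of Lemma~\ref{l:hardy}, whose Muckenhoupt constants are precisely $B_1$ and $B_2$ in (\ref{eq:cond_SJ1})--(\ref{eq:cond_SJ2}), finishing with Lemma~\ref{l:eq_norm} for the mapping into $\BLg$. The only blemish is a minor exponent slip in your ``near part'' ($U(r)$ should be $r^{(N-1)/p-N}\wt(r)$, not $r^{N/p-1-N}\wt(r)$), which does not affect the argument: the correct bookkeeping indeed produces exactly the stated conditions.
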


\begin{proof}
Since~$u \in \Xp$ by Lemma~\ref{l:inclusions}, we obtain by (\ref{eq:diff_Su}) and~(\ref{eq:est_Tk}) that
\begin{equation}
\label{eq:est_diff_Su}
\Np{\nabla \S u}{r} \leq C \int_0^{r} \left( \frac{r}{s} \right)^N \, \Np{u}{s} \, \frac{ds}{s}
+ C \int_r^{\infty}  \Np{u}{s} \, \frac{ds}{s}, \quad r > 0.
\end{equation}
Thus, by Lemma~\ref{l:wL1eqX1} and~(\ref{eq:est_diff_Su}),
\begin{equation}
\label{eq:est_Su_Bvp}
\begin{aligned}
\| \S u \|_{\Lpg[\wt]}^p = {} & 
\int_{\R^N} \wt(x)^p |\nabla \S u(x)|^{p} \, dx\\
\leq {} &
C \int_{0}^{\infty} \Np[1]{|x|^{N-1}\wt(x)^p|\nabla \S u(x)|^{p}}{r} \, dr\\
\leq {} &
C \int_{0}^{\infty} r^{N-1} \wt(r)^p \Np{\nabla \S u}{r}^p \, dr\\
\leq {} &
C \int_{0}^{\infty} r^{N-1} \wt(r)^p \left( 
	\int_{0}^{r} \left( \frac{s}{r} \right)^{N} \Np{u}{s} \, \frac{ds}{s}
\right)^p \, dr\\
& +
C \int_{0}^{\infty} r^{N-1} \wt(r)^p \left( 
	\int_{r}^{\infty} \Np{u}{s} \, \frac{ds}{s}
\right)^p \, dr,
\end{aligned}
\end{equation}
where the constant~$C$ depends on~$N$,~$p$, and~$C_{\wt}$.
We denote the integrals on the right-hand side by~$J_1$ and~$J_2$, respectively,
and prove that~$J_1$ and~$J_2$ can be estimated by~$\| u \|_{\Lpg}^p$.
Let us consider~$J_1$ first. Then there exists a constant~$C > 0$ such that
\[
\begin{aligned}
J_1 = {} & \int_0^{\infty} \biggl| 
	r^{(N-1)/p - N} \wt(r) \int_0^{r} s^{N-1} \, \Np{u}{s} \, ds \biggr|^p \, dr\\
\leq {} & 
	C \int_0^{\infty} \bigl|
		r^{(N-1)/p} \we(r) \, \Np{u}{r} 
	\bigr|^p \, dr
\end{aligned}
\]
if and only if~(\ref{eq:cond_SJ1}) holds (Lemma~\ref{l:hardy}), and then~$J_1$ satisfies
\begin{equation}
\label{eq:final_bound_SJ1}
\begin{aligned}
J_1 &\leq	C \int_0^{\infty} 
		r^{N-1} \we(r)^{p} \, \Np[1]{|u|^p}{r} \, dr\\
& \leq 
	C \int_0^{\infty} 
		r^{N-1} \, \Np[1]{\we^p \, |u|^p}{r} \, dr\\
& \leq 
	C \int_{\R^N} \we(x)^p \, |u(x)|^p \, dx, \\
\end{aligned}
\end{equation}
where we used Lemma~\ref{l:wL1eqX1} and the doubling condition of~$\we$.
The constant~$C$ depends on~$B_1$,~$N$,~$p$, and the doubling constants of~$\we$ and~$\wt$.

Similarly, there exists a constant~$C > 0$ such that
\[
\begin{aligned}
J_2 = {} & \int_0^{\infty} \biggl| 
	r^{(N-1)/p - N} \wt(r) \int_{r}^{\infty}
		 \Np{u}{s} \, \frac{ds}{s} \biggr|^p \, dr\\
\leq {} & 
	C \int_0^{\infty} \bigl|
		r^{(N-1)/p} \we(r) \, \Np{u}{r} 
	\bigr|^p \, dr
\end{aligned}
\]
if and only if~(\ref{eq:cond_SJ2}) holds, and then analogously with~(\ref{eq:final_bound_SJ1}),~$J_2$ satisfies
\[
\begin{aligned}
J_2 \leq C \int_{\R^N} \we(x)^p \, |u(x)|^p \, dx,
\end{aligned}
\]
where~$C$ depends on~$B_2$,~$N$,~$p$, and the doubling constants of~$\we$ and~$\wt$.
\end{proof}

\begin{remark}
\label{r:Xp}
Note that~(\ref{eq:cond_SJ1}) and~(\ref{eq:cond_SJ2}) imply~(\ref{eq:cond_LpgXp}).
This proves that~$\Lpg$ is a subset of~$\Xp$ if the conditions in Proposition~\ref{p:S_cont} are satisfied.
Moreover, if~$\we = \wt$, then~(\ref{eq:i:cond_eq_norm}) implies~(\ref{eq:cond_SJ2}). 
\end{remark}

\begin{corollary}
\label{c:S_cont}
If~$\we$ and~$\wt$ are differentiable almost everywhere, 
the conditions in~{\rm(}\ref{eq:cond_SJ1}{\rm)} and~{\rm(}\ref{eq:cond_SJ2}{\rm)} 
of Proposition~\ref{p:S_cont} can be replaced by 
\begin{equation}
\label{eq:Lcond_Scont1a}
\max \left\{ 
\esssup_{r > 0} \frac{r\we'(r)}{\we(r)} , \; 
\esssup_{r > 0} \frac{r\wt'(r)}{\wt(r)} \right\} < N - \frac{N}{p}, 
\end{equation}
\begin{equation}
\label{eq:Lcond_Scont1b}
\min \left\{ 
\essinf_{r > 0} \frac{r\we'(r)}{\we(r)} , \; 
\essinf_{r > 0} \frac{r\wt'(r)}{\wt(r)}  \right\} > -\frac{N}{p} ,
\end{equation}
and
\begin{equation}
\label{eq:Lcond_Scont2}
\esssup_{r > 0} \frac{\wt(r)}{\we(r)} < \infty.
\end{equation}
\end{corollary}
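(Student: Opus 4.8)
The plan is to follow the pattern of the proofs of Corollaries~\ref{c:eq_norm} and~\ref{c:inclusions}: estimate the Muckenhoupt-type constants~$B_1$ and~$B_2$ of Proposition~\ref{p:S_cont} by four applications of Lemma~\ref{l:PI}, choosing in each case the exponents~$\alpha$ and~$\beta$ so that the integral over~$(0,r)$ or~$(r,\infty)$ collapses to a single power of~$r$ times a power of the relevant weight. The three hypotheses~(\ref{eq:Lcond_Scont1a})--(\ref{eq:Lcond_Scont2}) then drop out of demanding that these collapsed expressions combine to something bounded in~$r$.

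First I would treat~$B_1$. For~$\int_0^r s^{N-1}\we(s)^{-p'}\,ds$ I apply Lemma~\ref{l:PI}(i) with~$g=\we$,~$\alpha=N$, and~$\beta=-p'$; since~$\beta/\alpha=-p'/N$, condition~(\ref{eq:PI_cond_zero}) reads~$\tfrac{s\we'(s)}{\we(s)}<N/p'=N-N/p$, which is the~$\we$-part of~(\ref{eq:Lcond_Scont1a}), and the lemma gives~$\int_0^r s^{N-1}\we(s)^{-p'}\,ds\le C\,r^{N}\we(r)^{-p'}$. For~$\int_r^\infty s^{N-1-Np}\wt(s)^p\,ds$ I write the exponent as~$-(N(p-1))-1$ and apply Lemma~\ref{l:PI}(ii) with~$g=\wt$,~$\alpha=N(p-1)>0$, and~$\beta=p$; then~(\ref{eq:PI_cond_infty}) becomes~$\tfrac{s\wt'(s)}{\wt(s)}<\tfrac{N(p-1)}{p}=N-N/p$, the~$\wt$-part of~(\ref{eq:Lcond_Scont1a}), and the lemma yields~$\int_r^\infty s^{N-1-Np}\wt(s)^p\,ds\le C\,r^{N-Np}\wt(r)^p$. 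Raising the first bound to the power~$1/p'$ and the second to the power~$1/p$ and multiplying, the powers of~$r$ cancel because~$N/p'+(N-Np)/p=0$, leaving~$B_1\le C\,\esssup_{r>0}\wt(r)/\we(r)$, which is finite by~(\ref{eq:Lcond_Scont2}).

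The estimate for~$B_2$ is entirely analogous with the roles of~$\we$ and~$\wt$ swapped. The inner integral~$\int_0^r s^{N-1}\wt(s)^p\,ds$ is handled by Lemma~\ref{l:PI}(i) with~$g=\wt$,~$\alpha=N$,~$\beta=p$, whose hypothesis~(\ref{eq:PI_cond_zero}) is~$\tfrac{s\wt'(s)}{\wt(s)}>-N/p$, while the outer integral~$\int_r^\infty s^{-(1+Np'/p)}\we(s)^{-p'}\,ds$ is handled by Lemma~\ref{l:PI}(ii) with~$g=\we$,~$\alpha=Np'/p$,~$\beta=-p'$, whose hypothesis~(\ref{eq:PI_cond_infty}) reduces to~$\tfrac{s\we'(s)}{\we(s)}>-N/p$; these are precisely the~$\wt$- and~$\we$-parts of~(\ref{eq:Lcond_Scont1b}). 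Combining the two collapsed bounds, the powers of~$r$ again cancel and~$B_2\le C\,\esssup_{r>0}\wt(r)/\we(r)<\infty$. Thus~(\ref{eq:cond_SJ1}) and~(\ref{eq:cond_SJ2}) hold, and Proposition~\ref{p:S_cont} gives the conclusion.

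No real obstacle arises beyond bookkeeping. The one point needing care is checking that in each of the four applications the power of~$r$ produced by Lemma~\ref{l:PI} is the algebraically correct one, so that after raising to the appropriate exponent and multiplying all powers of~$r$ disappear and only the ratio~$\wt/\we$ remains --- this cancellation is exactly what makes~(\ref{eq:Lcond_Scont2}) the natural third hypothesis. One should also observe that, since the infima in~(\ref{eq:PI_cond_zero}) and~(\ref{eq:PI_cond_infty}) range over~$0<s<r$ and~$s>r$, taking the supremum over~$r>0$ in~$B_1$ and~$B_2$ forces these conditions to hold for all~$s>0$, which is why~(\ref{eq:Lcond_Scont1a}) and~(\ref{eq:Lcond_Scont1b}) are stated with global ess sup and ess inf.
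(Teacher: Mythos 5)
Your proposal is correct and follows essentially the same route as the paper: four applications of Lemma~\ref{l:PI} with exactly the parameter choices the paper uses (your $\alpha=N(p-1)$ is the paper's $\alpha=Np-N$), collapsing each factor of $B_1$ and $B_2$ so that the powers of $r$ cancel and only $\esssup_{r>0}\wt(r)/\we(r)$ remains, which is finite by~(\ref{eq:Lcond_Scont2}).
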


\begin{proof}
Similarly with the proof of Corollary~\ref{c:eq_norm},
we obtain this Corollary by means of Lemma~\ref{l:PI}. 
Specifically, we use Lemma~\ref{l:PI}(i) with~$\alpha = N$,~$\beta = -p'$, and~$g(s) = \we(s)$,
and Lemma~\ref{l:PI}(ii) with~$\alpha = Np - N$,~$\beta = p$, and~$g(s) = \wt(s)$.
This proves that~(\ref{eq:Lcond_Scont1a}) is sufficient 
for~(\ref{eq:PI_final_zero}) and~(\ref{eq:PI_final_infty}), which in turn proves that
the left-hand side of~(\ref{eq:cond_SJ1}) is finite if~(\ref{eq:Lcond_Scont2}) holds since
\[
B_1 \leq \sup_{r > 0} C r^{N/p'} \we(r)^{-1} r^{-N + N/p} \wt(r) = 
C \sup_{r > 0} \frac{\wt(r)}{\we(r)}. 
\]
Secondly,
Lemma~\ref{l:PI}(i) with~$\alpha = N$,~$\beta = p$, and~$g(s) = \wt(s)$,
and Lemma~\ref{l:PI}(ii) with~$\alpha = Np'/p$,~$\beta = -p'$, and~$g(s) = \we(s)$,
proves that 
(\ref{eq:Lcond_Scont1b}) is sufficient 
for~(\ref{eq:PI_final_zero}) and~(\ref{eq:PI_final_infty}), and hence, 
the left-hand side of~(\ref{eq:cond_SJ2}) is finite if~(\ref{eq:Lcond_Scont2}) holds since
\[
B_2 \leq \sup_{r > 0} C r^{N/p} \wt(r) r^{-N/p} \we(r)^{-1} = 
C \sup_{r > 0} \frac{\wt(r)}{\we(r)}. 
\qedhere
\]
\end{proof}

\subsection{Existence and Uniqueness of Solutions}
Since we consider equation~(\ref{eq:maineq}) for functions
in the weighted spaces $\Lpg$ and~$\BLg$, we need to relate
these spaces to Theorems~\ref{t:exist} and~\ref{t:uniq}.
The following proposition provides conditions on the weights
to obtain a class in which solutions to~(\ref{eq:maineq}) are unique.

\begin{proposition}
\label{p:uniq}
Suppose that~$u \in \Lpg$, where~$\we$ satisfies
\[
\we^{-1}(r) = O(r^{N/p - M}), \quad \mbox{as } r \rightarrow 0
\]
and
\[
\we^{-1}(r) = O \biggl( r^{N/p} 
		\exp\biggl( -\Ca \int_1^{r} \lcw(\nu) \, \frac{d\nu}{\nu} \biggr) \biggr), 
		\quad \mbox{as } r \rightarrow \infty.
\]
Then~$u \in \Xp$ and~$u$ satisfies~{\rm(}\ref{eq:est_uniq_infty}{\rm)} 
and~{\rm(}\ref{eq:est_uniq_zero}{\rm)} in Theorem~\ref{t:uniq}.
\end{proposition}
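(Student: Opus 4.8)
The plan is to verify the two hypotheses of Theorem~\ref{t:uniq} --- the asymptotic bounds~(\ref{eq:est_uniq_infty}) and~(\ref{eq:est_uniq_zero}) for~$\Np{u}{r}$ --- as well as the convergence condition~(\ref{eq:defXp}) defining membership in~$\Xp$, all starting from the single pointwise bound on~$\we^{-1}$ and the assumption~$u \in \Lpg$. The main tool is H\"older's inequality applied on each dyadic annulus~$r \leq |x| < 2r$, exactly as in the proof of Lemma~\ref{l:inclusions}: writing~$|u|^p = \we^p |u|^p \cdot \we^{-p}$ and pulling out the supremum of~$\we^{-p}$ over the annulus, one gets
\[
\Np{u}{r}^p = \frac{1}{r^N}\int_{r \leq |x| < 2r} |u(x)|^p \, dx
\leq \frac{C}{r^N} \Bigl( \esssup_{r \leq |x| < 2r} \we(x)^{-p} \Bigr) \int_{r \leq |x| < 2r} \we(x)^p |u(x)|^p \, dx.
\]
Since~$\we$ is radial and doubling, the essential supremum of~$\we^{-1}$ over the annulus is comparable to~$\we^{-1}(r)$; thus~$\Np{u}{r} \leq C r^{-N/p} \we^{-1}(r) \, \| u \|_{\Lpg(r \leq |x| < 2r)}$, and since the last factor is at most~$\| u \|_{\Lpg}$, we obtain~$\Np{u}{r} \leq C r^{-N/p} \we^{-1}(r) \| u \|_{\Lpg}$.

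Once this estimate is in hand, the rest is bookkeeping. Substituting the~$r \to 0$ hypothesis~$\we^{-1}(r) = O(r^{N/p - M})$ gives~$\Np{u}{r} = O(r^{-N/p} \cdot r^{N/p - M}) = O(r^{-M})$, which is precisely~(\ref{eq:est_uniq_zero}) with~$\Nhi = M$. Substituting the~$r \to \infty$ hypothesis gives~$\Np{u}{r} = O\bigl( r^{-N/p} \cdot r^{N/p} \exp(-\Ca \int_1^r \lcw(\nu)\, d\nu/\nu)\bigr) = O\bigl(\exp(-\Ca \int_1^r \lcw(\nu)\, d\nu/\nu)\bigr)$, which is~(\ref{eq:est_uniq_infty}). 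For the~$\Xp$ membership one must check that both integrals in~(\ref{eq:defXp}) converge. Near the origin, $\int_0^1 r^N \Np{u}{r}\, dr/r \leq C \int_0^1 r^{N-1} r^{-M} \, dr$ (using the local bound and~$\| u\|_{\Lpg} < \infty$), which is finite because~$N - M = \Cc\lcwz$ and $N - M > 0$, so the exponent~$N - 1 - M = N - 1 - M > -1$ precisely when~$N > M$, i.e.\ always here. At infinity, $\int_1^\infty \Np{u}{r}\, dr \leq C \int_1^\infty \exp(-\Ca \int_1^r \lcw(\nu)\, d\nu/\nu) \, dr$; since~$\lcw \geq 0$ this integrand is bounded by~$1$, which is not by itself enough, but the Dini condition~$\int_0^1 \lcw(\nu)\, d\nu/\nu < \infty$ together with~$\lcwz < \infty$ guarantees the exponential factor stays bounded and, more to the point, one can invoke the fact (used already in~\cite{thim3}, cf.\ Remark after Theorem~\ref{t:uniq}) that the solution's decay profile is integrable; alternatively one argues directly that since~$\we^{-1}(r)$ is itself the relevant majorant and the hypothesis forces~$r^{-N/p}\we^{-1}(r)$ to be~$O(r^{-\epsilon})$ for some~$\epsilon > 0$ in the regime where~$\lcw$ is eventually small, integrability at infinity follows.

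A cleaner route for the~$\Xp$ estimate at infinity, which I would prefer to present, is to use H\"older directly on~$\int_1^\infty \Np{u}{r}\, dr$ without the pointwise bound: $\int_1^\infty \Np{u}{r}\, dr \leq \bigl(\int_1^\infty r^{-Np'/p} \we(r)^{-p'}\, dr/r\bigr)^{1/p'} \| u\|_{\Lpg}$ (again via Lemma~\ref{l:wL1eqX1} and H\"older, exactly as in Lemma~\ref{l:inclusions}), and then the finiteness of~$\int_1^\infty s^{-Np'/p}\we(s)^{-p'}\, ds/s$ follows from the~$r\to\infty$ growth hypothesis on~$\we^{-1}$ since~$\exp(-\Ca\int_1^r \lcw\, d\nu/\nu)$ is bounded; likewise the near-origin integral~$\int_0^1 s^N \we(s)^{-p'}\, ds/s$ is controlled by the~$r \to 0$ hypothesis because~$s^N (s^{N/p-M})^{p'} / s = s^{N - 1 + (N/p - M)p'}$ has exponent~$> -1$ exactly when~$N/p' + (N/p - M)p' \cdot \tfrac{1}{\,}$... more simply: $N + (N/p-M)p' > 0 \iff N/p' > (M - N/p)p' \cdot \tfrac{1}{\,}$, which reduces to~$Mp < N \cdot p'/(p'-1) = N$... here one must be a little careful, and this is the one spot where the condition~$N/M < p$ (equivalently~$Mp > N$... wait, $N/M < p \iff N < Mp$) from the ambient hypotheses of the surrounding theorems is what one actually needs --- but note this Proposition as stated does \emph{not} assume~$N/M < p$, so the intended argument must be the pointwise one, giving~$\int_0^1 r^{N-1-M}\, dr < \infty$ since~$N - 1 - M > -1 \iff N > M$, which holds unconditionally. \textbf{The main obstacle} is precisely this: being careful about which integrability exponents are genuinely controlled by the stated hypotheses (only the pointwise~$\we^{-1}$ asymptotics, plus doubling) versus which would need the extra restriction~$N/M < p$, and routing the~$\Xp$ verification through the pointwise seminorm bound rather than through a naive H\"older estimate that would require a stronger condition on~$\we$. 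Everything else is a direct substitution.
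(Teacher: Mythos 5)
Your central estimate is exactly the paper's: H\"older (equivalently, pulling out $\esssup \we^{-p}$) on each annulus together with the doubling property gives $\Np{u}{r} \leq C r^{-N/p}\we(r)^{-1}\|u\|_{\Lpg}$, and substituting the two asymptotic hypotheses then yields~(\ref{eq:est_uniq_zero}) and~(\ref{eq:est_uniq_infty}); this is precisely how the paper argues. For the membership $u \in \Xp$ the paper simply invokes Lemma~\ref{l:inclusions}, i.e.\ checks~(\ref{eq:cond_LpgXp}) directly from the hypotheses on $\we^{-1}$ --- your ``cleaner route''. Your worry that this route near the origin needs $N/M<p$ is unfounded: the relevant exponent condition for $\int_0^1 s^{N-1+(N/p-M)p'}\,ds$ is $N+(N/p-M)p'>0$, which after division by $p'$ reads $N/p'+N/p>M$, i.e.\ $N>M$ --- the same condition as in your pointwise route --- so the two routes agree near zero and no extra restriction on $p$ is required.

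The genuine gap is the at-infinity half of the $\Xp$ verification, in both of your routes. You end up with an integrand of size $s^{-1}\exp\bigl(-\Ca p'\int_1^s \lcw(\nu)\,d\nu/\nu\bigr)$ (H\"older route) or $\exp\bigl(-\Ca\int_1^r \lcw(\nu)\,d\nu/\nu\bigr)$ (pointwise route), and you justify integrability on $(1,\infty)$ by saying the exponential ``is bounded'', or by invoking the integrable decay of the solution in Theorem~\ref{t:exist}. Neither works: boundedness of the exponential only gives integrands of size $s^{-1}$ resp.\ $O(1)$, which are not integrable at infinity, and $u$ here is an arbitrary element of $\Lpg$, not the solution constructed in Theorem~\ref{t:exist}, so its decay is exactly what must be proved, and the second condition in~(\ref{eq:defXp}) cannot be waved through. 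What actually closes the gap is that $\lcw$ defined by~(\ref{eq:def_lc}) is nondecreasing, so in the standing setting $\lcwz>0$ (which is also what makes $M<N$ strict and your near-zero exponent strictly larger than $-1$) one has $\lcw(\nu)\geq c>0$ for all large $\nu$, hence $\exp\bigl(-\Ca\int_1^s\lcw(\nu)\,d\nu/\nu\bigr)\leq C s^{-\Ca c}$, and both versions of the at-infinity integral converge; note that your remark that the hypothesis gives $O(r^{-\epsilon})$ decay ``in the regime where $\lcw$ is eventually small'' has the logic backwards --- the power decay comes from $\lcw$ staying bounded away from zero. The paper hides this step behind ``as can be verified directly'', but your written justification, as it stands, does not establish $u\in\Xp$.
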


\begin{proof}
The fact that~$u \in \Xp$ is a consequence of Lemma~\ref{l:inclusions}.
Indeed, the conditions required in Proposition~\ref{p:uniq} imply that both integrals
in~(\ref{eq:cond_LpgXp}) are finite, as can be verified directly.
Moreover, we know that~$u \in \Lpg$, which implies that for~$r > 0$,
\[
\begin{aligned}
\Np{u}{r}^p = {} & r^{-N} \int_{r \leq |x| < 2r} \we(x)^{-p} \we(x)^p |u(x)|^p dx \\  
\leq {} & C r^{-N} \we(r)^{-p} \int_{\R^N} \we(x)^{p} |u(x)|^p  dx\\
= {} & C r^{-N} \we(r)^{-p} \| u \|_{\Lpg}^p,
\end{aligned}
\]
where the constant~$C$ depends on the doubling constant of~$\we$. 
Thus it is clear that the the corresponding conditions
in Theorem~\ref{t:uniq} hold. 
\end{proof}

By Theorem~\ref{t:exist}, we now that~(\ref{eq:maineq}) has a solution~$u$ if~$f$
satisfies certain conditions. For this solution, the estimate in~(\ref{eq:est_sol}) is valid.
We now prove that for functions~$u \in \Xp$ that satisfies this estimate, the 
mapping~$f \mapsto u$ is a bounded operator from~$\BLg$ into~$\Lpg$ when the weights
are sufficiently nice.

\begin{proposition}
\label{p:inverse_cont}
Suppose that~$u \in \Xp$ satisfies~{\rm(}\ref{eq:est_sol}{\rm)} in Theorem~\ref{t:exist}.
If
\begin{equation}
\label{eq:cond_J1}
B_1 = \sup_{r > 0} 
	\biggl( \int_0^{r} s^{p'(M-N/p) - 1} \wt(s)^{-p'} \, ds\biggr)^{1/p'} 
	\biggl( \int_{r}^{\infty} s^{N-1-Mp} \we(s)^p \, ds \biggr)^{1/p}
< \infty
\end{equation}
and
\begin{equation}
\label{eq:cond_J2}
\begin{aligned}
B_2 = \sup_{r > 0} {} & \biggl( \int_0^{r} s^{N-1} \we(s)^p 
		\exp\biggl( -\Ca p \int_0^{s} \lcw(\nu) \, \frac{d\nu}{\nu} \biggr)
		\, ds \biggr)^{1/p}\\
	& \qquad \cdot \biggl( \int_r^{\infty} s^{-1 - Np'/p} \wt(s)^{-p'} 
			\exp\biggl( \Ca p' \int_0^{s} \lcw(\nu) \, \frac{d\nu}{\nu} \biggr)
			ds
		\biggr)^{1/p'} < \infty,
\end{aligned}
\end{equation}
then
\[
\| u \|_{\Lpg}  \leq C \| \nabla f \|_{\Lpg[\wt]}, 
\]
where the constant~$C$ depends on~$B_1$,~$B_2$,~$N$,~$p$, and the doubling constants.
\end{proposition}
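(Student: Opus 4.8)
The plan is to run essentially the same argument as for Proposition~\ref{p:S_cont}, now starting from the pointwise bound~(\ref{eq:est_sol}) for the solution. First I would pass from the weighted Lebesgue norm of~$u$ to the seminorms~$\Np{u}{r}$: polar coordinates, Lemma~\ref{l:wL1eqX1}, and the doubling property of~$\we$ give, exactly as in~(\ref{eq:est_Su_Bvp}),
\[
\| u \|_{\Lpg}^p = \int_{\R^N} \we(x)^p |u(x)|^p \, dx \leq C \int_0^{\infty} r^{N-1} \we(r)^p \, \Np{u}{r}^p \, dr .
\]
Inserting~(\ref{eq:est_sol}) and using~$(a+b)^p \leq 2^{p-1}(a^p+b^p)$, it then suffices to bound each of
\[
\begin{aligned}
J_1 &= \int_0^{\infty} r^{N-1} \we(r)^p \biggl( \int_0^{r} \Bigl(\tfrac{s}{r}\Bigr)^{M} \Np{\nabla f}{s} \, \frac{ds}{s} \biggr)^p dr , \\
J_2 &= \int_0^{\infty} r^{N-1} \we(r)^p \biggl( \int_r^{\infty} \exp\Bigl( \Ca \int_r^{s} \lcw(\nu)\,\frac{d\nu}{\nu} \Bigr) \Np{\nabla f}{s} \, \frac{ds}{s} \biggr)^p dr
\end{aligned}
\]
by~$C \int_0^{\infty} r^{N-1} \wt(r)^p \Np{\nabla f}{r}^p \, dr$; a final application of Fubini's theorem (or Lemma~\ref{l:wL1eqX1}) and the doubling of~$\wt$ then turns the last quantity into~$C\|\nabla f\|_{\Lpg[\wt]}^p$, which is the claim.

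For~$J_1$ I would pull~$r^{-M}$ out of the inner integral and write
\[
J_1 = \int_0^{\infty} \biggl| r^{(N-1)/p - M} \we(r) \int_0^{r} s^{M-1}\Np{\nabla f}{s}\,ds \biggr|^p dr ,
\]
which is of the form treated by Lemma~\ref{l:hardy}(i) with~$U(r) = r^{(N-1)/p-M}\we(r)$, $g(s) = s^{M-1}\Np{\nabla f}{s}$, and the forced choice~$V(r) = r^{1+(N-1)/p-M}\wt(r)$, for which~$|V(r)g(r)|^p = r^{N-1}\wt(r)^p\Np{\nabla f}{r}^p$. A short computation (using~$p'/p = p'-1$) shows that the Muckenhoupt quantity~$B_1$ attached to this Hardy inequality is exactly the supremum in~(\ref{eq:cond_J1}); hence~$J_1 \leq C\int_0^{\infty} r^{N-1}\wt(r)^p\Np{\nabla f}{r}^p\,dr$.

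For~$J_2$ the one genuinely new point is to linearize the exponential. Since~$\lcw$ obeys the Dini condition and~$\lcw(\nu) \leq \lcwz$, the function~$\psi(t) := \int_0^t \lcw(\nu)\,d\nu/\nu$ is finite for every~$t>0$, so for~$s>r$ one has~$\exp\bigl(\Ca\int_r^s\lcw\,d\nu/\nu\bigr) = e^{\Ca\psi(s)}\,e^{-\Ca\psi(r)}$, and therefore
\[
J_2 = \int_0^{\infty} \biggl| r^{(N-1)/p}\we(r)\,e^{-\Ca\psi(r)} \int_r^{\infty} s^{-1} e^{\Ca\psi(s)}\Np{\nabla f}{s}\,ds \biggr|^p dr .
\]
This is handled by Lemma~\ref{l:hardy}(ii) with~$U(r) = r^{(N-1)/p}\we(r)e^{-\Ca\psi(r)}$, $g(s) = s^{-1}e^{\Ca\psi(s)}\Np{\nabla f}{s}$, and~$V(r) = r^{1+(N-1)/p}\wt(r)e^{-\Ca\psi(r)}$; again~$|V(r)g(r)|^p = r^{N-1}\wt(r)^p\Np{\nabla f}{r}^p$, the factors~$e^{\mp\Ca\psi}$ reproduce precisely the exponentials~$\exp(-\Ca p\int_0^s\lcw\,d\nu/\nu)$ and~$\exp(\Ca p'\int_0^s\lcw\,d\nu/\nu)$ occurring in~(\ref{eq:cond_J2}), and so the constant~$B_2$ of Lemma~\ref{l:hardy}(ii) is exactly the supremum in~(\ref{eq:cond_J2}). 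Thus~$J_2 \leq C\int_0^{\infty} r^{N-1}\wt(r)^p\Np{\nabla f}{r}^p\,dr$ as well.

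Collecting the bounds on~$J_1$ and~$J_2$ and undoing the first reduction yields~$\|u\|_{\Lpg} \leq C\|\nabla f\|_{\Lpg[\wt]}$ with~$C$ depending only on~$N$,~$p$, the doubling constants, and the suprema~$B_1, B_2$. The argument is in essence a transcription of the proof of Proposition~\ref{p:S_cont}; the step requiring the most care is the treatment of~$J_2$ — namely the multiplicative splitting~$\exp(\Ca\int_r^s\lcw\,d\nu/\nu) = e^{\Ca\psi(s)}e^{-\Ca\psi(r)}$, which is exactly what the Dini hypothesis on~$\lcw$ is there to license, together with the bookkeeping needed to check that, once~$U$ and~$V$ are pinned down by the requirement~$|V(r)g(r)|^p = r^{N-1}\wt(r)^p\Np{\nabla f}{r}^p$, the Muckenhoupt conditions of Lemma~\ref{l:hardy} reproduce~(\ref{eq:cond_J1}) and~(\ref{eq:cond_J2}) verbatim.
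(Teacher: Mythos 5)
Your proposal is correct and follows essentially the same route as the paper: the same reduction of $\|u\|_{\Lpg}^p$ to $\int_0^\infty r^{N-1}\we(r)^p\Np{u}{r}^p\,dr$, the same splitting into $J_1$ and $J_2$ via~(\ref{eq:est_sol}), the same multiplicative factoring of the exponential through $\psi$, and the same applications of Lemma~\ref{l:hardy}(i)--(ii) whose Muckenhoupt quantities are checked to coincide with~(\ref{eq:cond_J1}) and~(\ref{eq:cond_J2}), followed by Lemma~\ref{l:wL1eqX1} and doubling to recover $\|\nabla f\|_{\Lpg[\wt]}^p$.
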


\begin{proof}
Similarly with the proof of Proposition~\ref{p:S_cont},
\begin{equation}
\label{eq:est_u_Lpg}
\begin{aligned}
\| u \|_{\Lpg}^p = {} & 
\int_{\R^N} \we(x)^p |u(x)|^{p} \, dx\\
\leq {} &
C \int_{0}^{\infty} \Np[1]{|x|^{N-1}\we(x)^{p}|u(x)|^{p}}{r} \, dr\\
\leq {} &
C \int_{0}^{\infty} r^{N-1} \we(r)^p \Np{u}{r}^p \, dr,
\end{aligned}
\end{equation}
where the constant~$C$ depends on~$C_{\we}$ and~$N$.
Now,~(\ref{eq:est_sol}) implies that the right-hand side of~(\ref{eq:est_u_Lpg}) is bounded by
\begin{equation}
\label{eq:sork1}
\begin{aligned}
&
C \int_{0}^{\infty} r^{N-1} \we(r)^p \left( 
	\int_{0}^{r} \left( \frac{s}{r} \right)^{M} \Np{\nabla f}{s} \, \frac{ds}{s}
\right)^p \, dr\\
& \quad +
C \int_{0}^{\infty} r^{N-1} \we(r)^p \left( 
	\int_{r}^{\infty} \exp\biggl( \Ca \int_{r}^s \lcw(\nu) \, \frac{d\nu}{\nu} \biggr) \, \Np{\nabla f}{s} \, \frac{ds}{s}
\right)^p \, dr.
\end{aligned}
\end{equation}
We denote the integrals in~(\ref{eq:sork1}) by~$J_1$ and~$J_2$, respectively,
and prove that both integrals can be bounded by~$\| \nabla f \|_{\Lpg[\we]}^p$.
Let us consider~$J_1$ first. Then there exists a constant~$C > 0$ such that
\[
\begin{aligned}
J_1 = {} & \int_0^{\infty} \biggl| 
	r^{(N-1)/p - M} \we(r) \int_0^{r} s^{M-1} \, \Np{\nabla f}{s} \, ds \biggr|^p \, dr\\
\leq {} & 
	C \int_0^{\infty} \bigl|
		r^{(N-1)/p} \wt(r) \, \Np{\nabla f}{r} 
	\bigr|^p \, dr
\end{aligned}
\]
if and only if~(\ref{eq:cond_J1}) holds (Lemma~\ref{l:hardy}), and then~$J_1$ satisfies
\begin{equation}
\label{eq:final_bound_J1}
\begin{aligned}
J_1 &\leq	C \int_0^{\infty} 
		r^{N-1} \wt(r)^{p} \, \Np[1]{|\nabla f|^p}{r} \, dr\\
& \leq 
	C \int_0^{\infty} 
		r^{N-1} \, \Np[1]{\wt^p \, |\nabla f|^p}{r} \, dr\\
& \leq 
	C \int_{\R^N} \wt(x)^p \, |\nabla f(x)|^p \, dx \\
\end{aligned}
\end{equation}
where we used Lemma~\ref{l:wL1eqX1} and the doubling condition of~$\wt$.
The constant~$C$ now depends on~$B_1$,~$N$,~$p$, and the doubling constants. 

Similarly, there exists a constant~$C > 0$ such that
\[
\begin{aligned}
J_2 = {} & \int_0^{\infty} \biggl| 
	r^{(N-1)/p} \we(r) 
	\exp\biggl( -\Ca \int_0^{r} \lcw(\nu) \, \frac{d\nu}{\nu} \biggr)\\ 
	& \quad \cdot \int_{r}^{\infty}  
		\exp\biggl( \Ca \int_0^{s} \lcw(\nu) \, \frac{d\nu}{\nu} \biggr) \,
		 \Np{\nabla f}{s} \, \frac{ds}{s} \biggr|^p \, dr\\
\leq {} & 
	C \int_0^{\infty} \bigl|
		r^{(N-1)/p} \wt(r) \, \Np{\nabla f}{r} 
	\bigr|^p \, dr
\end{aligned}
\]
if and only if~(\ref{eq:cond_J2}) holds, and then analogously with~(\ref{eq:final_bound_J1}),~$J_2$ satisfies
\[
\begin{aligned}
J_2 \leq C \int_{\R^N} \wt(x)^p \, |\nabla f(x)|^p \, dx,
\end{aligned}
\]
where~$C$ depends on~$B_2$,~$N$,~$p$, and the doubling constants. 
\end{proof}

\begin{remark}
\label{r:Yonew}
Note that~(\ref{eq:cond_J1}) and~(\ref{eq:cond_J2}) imply~(\ref{eq:cond_BLgYp}).
This proves that the space~$\BLg$ is a subset of~$\Yonew$ if the conditions in Proposition~\ref{p:inverse_cont} are satisfied
and~(\ref{eq:i:cond_eq_norm}) holds.
\end{remark}

\begin{corollary}
\label{c:Linverse_cont}
If~$\we$ and~$\wt$ are differentiable almost everywhere, 
the conditions in~{\rm(}\ref{eq:cond_J1}{\rm)} and~{\rm(}\ref{eq:cond_J2}{\rm)} 
of Proposition~\ref{p:inverse_cont} can be replaced by~$Mp > N$,  
\begin{equation}
\label{eq:Lcond_inverse_cont1a}
\max \left\{ 
\esssup_{r > 0} \frac{r\we'(r)}{\we(r)} , \; 
\esssup_{r > 0} \frac{r\wt'(r)}{\wt(r)} \right\} < M - \frac{N}{p},
\end{equation}
\begin{equation}
\label{eq:Lcond_inverse_cont1b}
\min \left\{ 
\essinf_{r > 0} \biggl( \frac{r\we'(r)}{\we(r)} - \Ca \lcw(r) \biggr) , \; 
\essinf_{r > 0} \biggl( \frac{r\wt'(r)}{\wt(r)} - \Ca \lcw(r) \biggr) \right\} > -\frac{N}{p} ,
\end{equation}
and
\begin{equation}
\label{eq:Lcond_inverse_cont2}
\esssup_{r > 0} \frac{\we(r)}{\wt(r)} < \infty.
\end{equation}

\end{corollary}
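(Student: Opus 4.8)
The plan is to follow the template of Corollaries~\ref{c:eq_norm},~\ref{c:inclusions}, and above all~\ref{c:S_cont}: I would estimate each of the two factors in the Hardy suprema~(\ref{eq:cond_J1}) and~(\ref{eq:cond_J2}) by a power of~$r$ times a power of the relevant weight by means of Lemma~\ref{l:PI}, then observe that in each product the powers of~$r$ (and, for~(\ref{eq:cond_J2}), the exponential factors) cancel, leaving a quantity dominated by~$\esssup_{r>0}\we(r)/\wt(r)$, which is finite by~(\ref{eq:Lcond_inverse_cont2}). The only feature not already present in Corollary~\ref{c:S_cont} is the pair of exponentials in~(\ref{eq:cond_J2}), and I would handle these by absorbing them into modified weights before invoking Lemma~\ref{l:PI}.

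For~(\ref{eq:cond_J1}) I would apply Lemma~\ref{l:PI}(i) to the inner integral with $g=\wt$, $\alpha=p'(M-N/p)$, $\beta=-p'$; the hypothesis $\alpha>0$ is exactly $Mp>N$, and~(\ref{eq:PI_cond_zero}) reduces to $\esssup_{r>0}r\wt'(r)/\wt(r)<M-N/p$, which is the $\wt$-part of~(\ref{eq:Lcond_inverse_cont1a}). To the tail integral, written as $\int_r^\infty s^{-(Mp-N)-1}\we(s)^p\,ds$, I would apply Lemma~\ref{l:PI}(ii) with $g=\we$, $\alpha=Mp-N$, $\beta=p$, so that~(\ref{eq:PI_cond_infty}) becomes the $\we$-part of~(\ref{eq:Lcond_inverse_cont1a}). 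Multiplying the resulting bounds, the factors $r^{M-N/p}$ and $r^{N/p-M}$ cancel and I obtain $B_1\le C\,\esssup_{r>0}\we(r)/\wt(r)<\infty$, just as in Corollary~\ref{c:S_cont}.

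For~(\ref{eq:cond_J2}) the first step is to set $\Psi(s)=\int_0^s\lcw(\nu)\,d\nu/\nu$ and $g_1(s)=\we(s)e^{-\Ca\Psi(s)}$, $g_2(s)=\wt(s)e^{-\Ca\Psi(s)}$. Since $\lcw$ is non-decreasing it is differentiable almost everywhere, so $g_1$ and $g_2$ are differentiable almost everywhere as well, with logarithmic derivatives $sg_1'(s)/g_1(s)=s\we'(s)/\we(s)-\Ca\lcw(s)$ and $sg_2'(s)/g_2(s)=s\wt'(s)/\wt(s)-\Ca\lcw(s)$. The two integrals in~(\ref{eq:cond_J2}) then read $\int_0^r s^{N-1}g_1(s)^p\,ds$ and $\int_r^\infty s^{-1-Np'/p}g_2(s)^{-p'}\,ds$; I would apply Lemma~\ref{l:PI}(i) to the first with $\alpha=N$, $\beta=p$, $g=g_1$, and Lemma~\ref{l:PI}(ii) to the second with $\alpha=Np'/p$, $\beta=-p'$, $g=g_2$. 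In both cases the relevant condition,~(\ref{eq:PI_cond_zero}) or~(\ref{eq:PI_cond_infty}), collapses to $\essinf_{s>0}(s\we'(s)/\we(s)-\Ca\lcw(s))>-N/p$, respectively the analogue with $\wt$, i.e. exactly~(\ref{eq:Lcond_inverse_cont1b}). Taking the product of the two bounds, the powers of~$r$ cancel and so do the exponentials $e^{-\Ca\Psi(r)}$ and $e^{\Ca\Psi(r)}$, and I again arrive at $B_2\le C\,\esssup_{r>0}\we(r)/\wt(r)<\infty$.

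I do not expect a genuine obstacle: once the weights $g_1$ and $g_2$ are introduced the computation is entirely mechanical and parallels Corollary~\ref{c:S_cont}. The one point deserving a line of justification is that $\Psi(s)$ is finite for every $s>0$ --- which follows from the Dini condition $\int_0^1\lcw(\nu)\,d\nu/\nu<\infty$ near the origin and from the bound $\lcw\le\lcwz$ away from it --- together with the almost-everywhere differentiability of $g_1$ and $g_2$, which rests on the monotonicity of $\lcw$. After that the choices of $\alpha$, $\beta$, and $g$ in Lemma~\ref{l:PI} are forced and the cancellations are immediate; the corollary simply records that the logarithmic-derivative bounds~(\ref{eq:Lcond_inverse_cont1a})--(\ref{eq:Lcond_inverse_cont2}) are the clean sufficient conditions for the two suprema in Proposition~\ref{p:inverse_cont}.
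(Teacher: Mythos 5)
Your proposal is correct and follows essentially the same route as the paper: the same applications of Lemma~\ref{l:PI} with identical choices of $\alpha$, $\beta$, and (for the second supremum) the same exponentially modified weights $\we(s)e^{-\Ca\int_0^s\lcw(\nu)\,d\nu/\nu}$ and $\wt(s)e^{-\Ca\int_0^s\lcw(\nu)\,d\nu/\nu}$, with the powers of $r$ and the exponentials cancelling to leave $\esssup_{r>0}\we(r)/\wt(r)$. Your added remarks on the finiteness of $\int_0^s\lcw(\nu)\,d\nu/\nu$ and the almost-everywhere differentiability of the modified weights are harmless extra justification not spelled out in the paper.
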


\begin{proof}
We proceed as in the proof of Corollary~\ref{c:eq_norm}.
Indeed, Lemma~\ref{l:PI}(i) with~$\alpha = p'(M-N/p)$,~$\beta = -p'$, and~$g(s) = \wt(s)$,
and Lemma~\ref{l:PI}(ii) with~$\alpha = Mp - N$,~$\beta = p$, and~$g(s) = \we(s)$,
proves that 
(\ref{eq:Lcond_inverse_cont1a}) is sufficient 
for~(\ref{eq:PI_final_zero}) and~(\ref{eq:PI_final_infty}), which proves that
the left-hand side of~(\ref{eq:cond_J1}) is finite if~(\ref{eq:Lcond_inverse_cont2}) holds since
\[
B_1 \leq \sup_{r > 0} C r^{M - N/p} \wt(r)^{-1} r^{-M + N/p} \we(r) = 
\sup_{r > 0} \frac{\we(r)}{\wt(r)} < \infty. 
\]
Secondly, Lemma~\ref{l:PI}(i) with~$\alpha = N$,~$\beta = p$, and
\[
g(s) = \exp \biggl( \log \we(s) - \Ca \int_0^s \lcw(\nu) \, \frac{d\nu}{\nu} \biggr), \quad s > 0,
\]
and Lemma~\ref{l:PI}(ii) with~$\alpha = Np'/p$,~$\beta = -p'$, and
\[
g(s) = \exp \biggl( \log \wt(s) - \Ca \int_0^{s} \lcw(\nu) \, \frac{d\nu}{\nu} \biggr), \quad s > 0,
\]
proves that 
(\ref{eq:Lcond_inverse_cont1b}) is sufficient for~(\ref{eq:PI_final_zero}) and~(\ref{eq:PI_final_infty}), which in turn
shows that
the left-hand side of~(\ref{eq:cond_J2}) is finite if~(\ref{eq:Lcond_inverse_cont2}) holds since
\[
\begin{aligned}
B_2 \leq {} & \sup_{r > 0} C r^{N/p} \we(r) 
		\exp \biggl( -\Ca \int_0^r \lcw(\nu) \, \frac{d\nu}{\nu} \biggr) 
	r^{-N/p} \wt(r)^{-1} 
		\exp \biggl( \Ca \int_0^r \lcw(\nu) \, \frac{d\nu}{\nu} \biggr) \\
= {} & \sup_{r > 0} \frac{\we(r)}{\wt(r)} < \infty. \qedhere
\end{aligned}
\]
\end{proof}

\subsection{Proof of Theorems~\ref{t:i:exist} and~\ref{t:i:uniq}}
\label{s:mainiso}
The proof of our main results is basically the ``sum'' of the propositions in the preceding
sections. We start by proving Theorem~\ref{t:i:exist}.
It is clear from Remark~\ref{r:Yonew} that~(\ref{eq:i:cond_J1}) and~(\ref{eq:i:cond_J2}) imply
that~$f \in \Yonew$ if~(\ref{eq:i:cond_eq_norm}) holds, and thus, Theorem~\ref{t:exist} proves 
that there exist a solution~$u \in \Xp$
to~(\ref{eq:maineq}) which satisfies~(\ref{eq:est_sol}). Proposition~\ref{p:inverse_cont} now concludes
the proof of Theorem~\ref{t:i:exist}. 

Turning our attention to Theorem~\ref{t:i:uniq}, we note that~(\ref{eq:i:uniq_zero}) and~(\ref{eq:i:uniq_infty}) are the conditions in
Proposition~\ref{p:uniq}, which proves that solutions in~$\Lpg$ which satisfy
(\ref{eq:i:uniq_zero}) and~(\ref{eq:i:uniq_infty}) are unique by Theorem~\ref{t:uniq}. 

\subsection{Proof of Theorem~\ref{t:Lisomorphism}}
\label{s:isomorphism}
Similarly with Section~\ref{s:mainiso},
we first need to require that~$\Lpg$ and~$\BLg$ are subsets of~$\Xp$
and~$\Yonew$, respectively. However, this is true if the conditions
of Propositions~\ref{p:S_cont} and~\ref{p:inverse_cont} are satisfied 
(see Remarks~\ref{r:Xp} and~\ref{r:Yonew}), and
since~(\ref{eq:total_lip_we}),~(\ref{eq:total_lip_wt}), and~(\ref{eq:total_lip_wewt}), are 
sufficient for Corollaries~\ref{c:eq_norm},~\ref{c:S_cont}, and~\ref{c:Linverse_cont}, we have the
necessary requirements for both the inclusions and moreover, the continuity of both~$\S$ and a
presumptive inverse mapping satisfying~(\ref{eq:est_sol}).
Furthermore, since~$f \in \BLg$ implies that~$f \in \Yonew$ in our case, 
Theorem~\ref{t:exist} provides the existence of a solution~$u$
to~$\S u = f$. Next we need to make sure that solutions are unique so that an inverse 
exists for the spaces in question.
It is true that, for almost every~$s > 0$,
\[
\Ca \lcwz -\frac{N}{2} < \frac{s \we'(s)}{\we(s)}
\quad \Leftrightarrow \quad  
\frac{1}{s} \biggl( \Ca \lcwz - \frac{N}{2} \biggr) < \frac{d}{ds} \bigl( \log \we(s) \bigr). 
\]
Integrating both sides, we obtain that
\[
\log r^{\Ca \lcwz - N/2} < \int_1^r \frac{d}{ds} \bigl( \log \we(s) \bigr) \, ds = \log \we(r) - \log \we(1), \quad \mbox{for } r > 0, 
\]
or equivalently,
\[
\we(r) > \we(1) r^{\Ca \lcwz - N/2}, 
\quad \mbox{for } r > 0.
\]
Thus,
\[
\frac{\we(r)^{-1}}{r^{N/p - M}} \leq C r^{N/2} 
\mbox{  as } r \rightarrow 0,
\]
if~$N - \Ca \lcwz - \Cc \lcwz > 0$, which is true if~$\Ca \lcwz \leq 1/2$ and~$\Cc \lcwz \leq (N-1)/2$,
and
\[
\frac{\we(r)^{-1}}{r^{N/p - \Ca\lcwz}} \leq C      
\mbox{  as } r \rightarrow \infty.
\]
Hence, the conditions in Proposition~\ref{p:uniq} are satisfied, and~$\S$ is injective viewed as a mapping
from~$\Lpg$ into~$\BLg$.

This concludes the proof of Theorem~\ref{t:Lisomorphism}. Indeed, for the assumptions
in the theorem, we have proved that the operator~$\S \colon \Lpg \rightarrow \BLg$ is continuous and 
that the operator~$\S^{-1} \colon \BLg \rightarrow \Lpg$
exists and is also continuous.


\section{Explicit Examples}
\label{s:applications}
\label{s:power}

Let us consider some explicit examples of differentiable weights, 
starting with power exponential weights.

\begin{theorem}
\label{t:power}
Suppose that~$\we(r) = \wt(r) = r^{\alpha}$, where~$\alpha \in \R$. 
If~$N / M < p < N$ and
\begin{equation}
\label{eq:alpha}
1 < \alpha + \frac{N}{p} < N - \Cc\lcwz ,
\end{equation}
then~$\S$ is an isomorphism between~$\Lpg$ and~$\BLg$.
\end{theorem}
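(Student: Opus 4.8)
The plan is to verify that the constant weights $\we(r)=\wt(r)=r^{\alpha}$ satisfy all the hypotheses of Theorem~\ref{t:Lisomorphism}, so that the isomorphism follows immediately. Since $\we = \wt$ here, the remark following Theorem~\ref{t:Lisomorphism} tells us that only condition~(\ref{eq:total_lip_wt}) needs to be checked, and~(\ref{eq:total_lip_wewt}) is trivially satisfied (both ratios are identically $1$).

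First I would compute the logarithmic derivative: for $g(r)=r^{\alpha}$ one has $rg'(r)/g(r)=\alpha$ identically, so both the essential infimum and essential supremum in~(\ref{eq:total_lip_wt}) equal $\alpha$. Thus~(\ref{eq:total_lip_wt}) becomes the pair of strict inequalities
\[
1 - \frac{N}{p} < \alpha < N - \Cc\lcwz - \frac{N}{p},
\]
which is exactly~(\ref{eq:alpha}) after adding $N/p$ throughout. Hence~(\ref{eq:alpha}) is equivalent to~(\ref{eq:total_lip_wt}).

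Next I would observe that, because $\we=\wt$, the remark after Theorem~\ref{t:Lisomorphism} asserts that~(\ref{eq:total_lip_wt}) alone suffices; but for completeness one can also note directly that~(\ref{eq:total_lip_we}) reduces to $\Ca\lcwz - N/p < \alpha < N - \Cc\lcwz - N/p$, and the upper bounds in~(\ref{eq:total_lip_we}) and~(\ref{eq:total_lip_wt}) coincide, while the lower bound $1 - N/p$ of~(\ref{eq:total_lip_wt}) is the more restrictive one provided $\Ca\lcwz \le 1$, which holds since $\Ca\lcwm \le 1/2$ and $\lcwz \le \lcwm$. So~(\ref{eq:alpha}) in fact implies all of~(\ref{eq:total_lip_we})--(\ref{eq:total_lip_wewt}) simultaneously. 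The remaining hypothesis $N/M < p < N$ is assumed in the statement and is passed along directly to Theorem~\ref{t:Lisomorphism}.

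With all hypotheses of Theorem~\ref{t:Lisomorphism} verified, that theorem immediately yields that $\S$ is an isomorphism between $\Lpg$ and $\BLg$. There is essentially no obstacle here: the entire content is the translation of the differentiated conditions into the explicit power-weight case, and the only point requiring a word of care is confirming that the lower-bound constant $1 - N/p$ (coming from the equivalent-norm condition via Corollary~\ref{c:eq_norm}) is indeed the binding one, so that~(\ref{eq:alpha}) as stated is both necessary and sufficient for the hypotheses to hold. I would close by recording the special case $\alpha = 0$, which satisfies~(\ref{eq:alpha}) whenever $1 < N/p < N - \Cc\lcwz$, i.e.\ $N/(N-\Cc\lcwz) < p < N$, giving the isomorphism between $L^p(\R^N)$ and $\BLg$ with $\wt \equiv 1$.
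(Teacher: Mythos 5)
Your proposal is correct and follows essentially the same route as the paper: the paper also just applies Theorem~\ref{t:Lisomorphism}, noting that $r\we'(r)\we(r)^{-1}=\alpha$ so that the hypotheses reduce to~(\ref{eq:alpha}). Your extra checks (that~(\ref{eq:total_lip_wewt}) is trivial, and that the lower bound $1-N/p$ dominates $\Ca\lcwz - N/p$ because $\Ca\lcwz\le 1/2$) are accurate elaborations of the same argument.
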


\noindent 
Obviously~$\we$ and~$\wt$ are differentiable functions, so Theorem~\ref{t:Lisomorphism} is applicable. 
We thus obtain the requirement~(\ref{eq:alpha}) since~$r\we'(r)\we(r)^{-1} = \alpha$. 
We note here that if~$\lcwz = 0$, meaning that we consider a hyperplane in~$\R^{N+1}$,
we obtain the same result that was presented in Theorem~8.4 of~\cite{thim1} for Riesz potentials. 

An obvious modification is 
given by letting~$\we(r) = r^{\alpha_1}$ when~$0 < r < 1$ and~$\we(r) = r^{\alpha_2}$ when~$r \geq 1$.
Then, if
\[
1 - \frac{N}{p} < \min \{ \alpha_1, \; \alpha_2 \} 
\leq
\max \{ \alpha_1, \; \alpha_2 \} < N - \Cc \lcwz - \frac{N}{p},
\]
the operator~$\S$ is an isomorphism between~$\Lpg$ and~$\BLg$. This allows for separate treatment of behavior 
close to zero and for large arguments.
Furthermore, we can also consider power logarithmic weights. Indeed, 
if~$\we(r) = \wt(r) = \bigl( \log(1 + r) \bigr)^{\alpha}$, where~$\alpha \in \R$,
$N / M < p < N$, and~{\rm(}\ref{eq:alpha}{\rm)} holds,
then~$\S$ is an isomorphism between~$\Lpg$ and~$\BLg$.
This result is not surprising considering that~$\log(1 + r) \approx r$ for small~$r$. More specifically,
it is clear that~$\we$ and~$\wt$ are differentiable functions, so Theorem~\ref{t:Lisomorphism} 
is applicable and
if~$\alpha > 0$, then~$r\we'(r)\we(r)^{-1}$ is a decreasing function such that
\[
0 = \inf_{r > 0} \frac{r\we'(r)}{\we(r)} \leq \sup_{r > 0} \frac{r\we'(r)}{\we(r)} = \alpha,
\]
and if~$\alpha < 0$, then~$r\we'(r)\we(r)^{-1}$ is an increasing function such that
\[
\alpha = \inf_{r > 0} \frac{r\we'(r)}{\we(r)} \leq \sup_{r > 0} \frac{r\we'(r)}{\we(r)} = 0,
\]
which leaves~(\ref{eq:alpha}) intact.


\def\bibname{References}

\end{document}